\newtheorem{theorem}{Theorem}
\newtheorem{definition}{Definition}
\newtheorem{lemma}{Lemma}
\newtheorem{notation}{Notation}
\newtheorem{proposition}{Proposition}
\newtheorem{remark}{Remark}
\newenvironment{proof}[1][Proof]{\noindent\textbf{#1.} }{\ \rule{0.5em}{0.5em}}
\begin{document}

\begin{center}
\bigskip {\LARGE A Ladder Tournament}

\bigskip

Roland Pongou, Bertrand Tchantcho and Narcisse Tedjeugang\footnote{%
Pongou is at the Department of Economics of the University of Ottawa and a
visiting scholar in the Department of Economics of the Massachusetts
Institute of Technology; Tchantcho is at \'{E}cole Normale Sup\'{e}rieure
(UY1) and the Department of Economics of the Universit\'{e} de Cergy
Pontoise; and Tedjeugang is at \'{E}cole Normale Sup\'{e}rieure (UY1).
\par
Correspondence can be addressed to Pongou at: rpongou@uottawa.ca or
rpongou@mit.edu}

\bigskip

June 2015\bigskip
\end{center}

\textbf{Abstract}: Ladder tournaments are widely used to rank individuals in
real-world organizations and games. Their mathematical properties however
are still poorly understood. We formalize the ranking rule generated by a
ladder tournament, and we show that it is neither complete nor transitive in
general. If it is complete, then it is transitive and its asymmetric
component is a finite union of transitive tournaments. We also study the
relationship between an individual's rank and his performance as measured by
the frequency at which he is pivotal. We show an individual's pivotability
is a weakly increasing function of his rank.\pagebreak 

\section{Introduction}

\bigskip

This paper studies the mathematical properties of a \textit{game ladder}
involving a finite number of vertices or players. Players are listed as if
on the rungs of a ladder, and they compete against each other for
higher-ranked positions. Each player challenges each of the players above
him on the ladder, and if the lower-placed player wins, then the two players
swap positions on the ladder. The game goes on until no lower-placed player
wins a challenge against a higher-placed player. This tournament model is
used in real-life organizations to make decisions on hiring, firing, and
promotion.

We model a game ladder as a hierarchical organization $(N,T,f)$ where $N$ is
a non-empty finite set of vertices or players, $T=\{1,...,j\}$ an ordered
finite set of position types with cardinility $|T|=j\geq 2$, and $%
f:T^{N}\longrightarrow 
\mathbb{R}
$ a real-valued production function measuring organizational performance.
Positions are ordered in degree of importance, where type $1$ positions are
less important than type 2 positions, type $2$ positions less important than
type $3$ position, and so on. We assume that the organization is monotonic
in that for any position profiles $x=\left( x_{1},...,x_{n}\right) $ and $%
z=\left( z_{1},...,z_{n}\right) $ such that $x\leq z$, $f\left( x\right)
\leq f\left( z\right) $. In any position profile $x=(x_{1},x_{2},..,x_{n})%
\in T^{N}$, $x_{i}$ denotes the position of player $i$.

In a game ladder, players compete for more higher-level positions as
follows. Consider two players $p$ and $q$ in an organization. Player $p$
occupies a position indexed $s$ and player $q$ occupies a position indexed $%
r $, where $s$ is a lower-level position than $r$. We say that $p$ beats $q$
in position $r$ relative to position $s$, denoted $p\succeq _{(r,s)}q$, if
aggregate output increases as a result of swapping $p$ and $q$ no matter how
the other members of the organization are allocated to the different
positions. If $p$ beats $q$ in each position relative to positions that are
lower-ranked, we say that $p$ is globally more influential or productive
than $q$, and this is denoted by $p\succeq q$.

Our goal is to study some mathematical properties of the global influence
relation $\succeq $. We find that this relation is not transitive in
general. However, if it is complete, it is transitive and its asymmetric
component $\succ $ is a finite union of transitive tournaments (that is, $%
\succ $ is a finite unions of complete, asymmetric, and transitive binary
relations).

We note that the global influence relation $\succeq $ coincides with the
replacement relation introduced by Isbell (1958) for games that have only
two layers. However, the properties of $\succeq $ for games which have more
than two layers are quite different the properties of $\succeq $ for games
that have only two layers. In fact, when there are only two layers, $\succeq 
$ is transitive (see, e.g., Taylor and Zwicker (1999)), which contrasts with
the finding we obtain when there are more than two layers.

We also examine whether the relation $\succeq $ predicts the frequency with
which a player is pivotal in the game. The notion of pivotability was
introduced by Freixas (2005b). We show that a more globally influential
player is (weakly) more likely to be pivotal.

The next section shows how the ranking rule $\succeq $ is generated through
the game ladder. Section 3 studies its mathematical properties and concludes.

\section{Ladder Tournament}

A ladder tournament in a game $(N,T,f)$ is a system of challenges that
proceeds as follows. Let $r$ and $s$ be two positions occupied by players $p$
and $q$, respectively, where $r$ is more important than $s$. We say that $p$
beats $q$ in position $r$ relative to position $s$, denoted by $p\succeq
_{(r,s)}q$, if swapping the two players decreases the aggregate output of
the organization no matter how the other players are allocated to the
different positions. We say that $p$ is globally more influential than $q$
if $p\succeq _{(r,s)}q$ for any positions $s$ and $r$ , and this is denoted
by $p\succeq q$. The binary relation $\succeq $ determines the global
ranking of the ladder tournament. The formal definition of each of these
relations is below. We denote by $e^{p}=(0,...,0,1,0,...,0)$ the $p$th unit $%
n$-component vector.

\begin{definition}
Let $\left( N,T,f\right) $ be a game ladder, and $p$ and $q$ two players.

1) Let $s$ and $r$ be two tasks such that $r>s$. We say that $p\succeq
_{(r,s)}q$ if: $\forall x\in T^{N}$ such that $x_{p}=x_{q}=s$, $%
f(x+(r-s)e^{p})\geq f(x+(r-s)e^{q})$.

2) $p$ is said to be more globally influential than $q$, denoted $p\succeq q$%
, if: $p\succeq _{(r,s)}q$ for all $s,r\in T$ such that $r>s$.

The symmetric and asymmetric components of each relation $\succeq _{(r,s)}$
are denoted by $\succ _{(r,s)}$and $\sim _{(r,s)}$, respectively, and those
of the relation $\succeq $ by $\succ $ and $\sim $, respectively.
\end{definition}

We introduce the notion of a linear game ladder.

\begin{definition}
Let $\left( N,T,f\right) $ be a game ladder. $\left( N,T,f\right) $ is said
to be linear if $\succeq $ is complete.
\end{definition}

\section{Properties}

\subsection{Completeness, Transitivity, Tournament}

We show that the symmetric component of the relation $\succeq $ is an
equivalence relation.

\begin{proposition}
Let $\left( N,T,f\right) $ be a game ladder. The relation $\sim $ is an
equivalence relation.
\end{proposition}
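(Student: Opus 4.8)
The plan is to show that $\sim$ is exactly the relation of \emph{interchangeability} of players, and then to prove transitivity of the latter by a short combinatorial (group-theoretic) argument. For a profile $x\in T^{N}$ and two players $p,q$, write $x^{pq}$ for the profile obtained from $x$ by swapping the positions of $p$ and $q$ (so $x^{pq}_{p}=x_{q}$, $x^{pq}_{q}=x_{p}$, and $x^{pq}_{i}=x_{i}$ for $i\notin\{p,q\}$). The first step is the claim that $p\sim q$ if and only if $f(x)=f(x^{pq})$ for every $x\in T^{N}$. Once this is in hand, reflexivity of $\sim$ is immediate since $x^{pp}=x$, and symmetry is immediate since $x^{pq}=x^{qp}$, so the defining condition is symmetric in $p$ and $q$; only transitivity will require an argument.

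To prove the claimed equivalence, the direction ``$\Leftarrow$'' is easy: if $f$ is invariant under the swap $x\mapsto x^{pq}$, then for any $r>s$ and any $x$ with $x_{p}=x_{q}=s$ the profile $x+(r-s)e^{p}$ is carried to $x+(r-s)e^{q}$ by that swap, so $f(x+(r-s)e^{p})=f(x+(r-s)e^{q})$; this gives both $p\succeq_{(r,s)}q$ and $q\succeq_{(r,s)}p$ for all $r>s$, i.e.\ $p\sim q$. For ``$\Rightarrow$'', given $x$ put $a=x_{p}$, $b=x_{q}$; if $a=b$ then $x^{pq}=x$ and there is nothing to prove, while if $a\neq b$ we may assume $a>b$ (the case $a<b$ is symmetric, using $\succeq_{(b,a)}$), and setting $y=x-(a-b)e^{p}$ we get $y_{p}=y_{q}=b$, so that $p\sim q$ (via $p\succeq_{(a,b)}q$ and $q\succeq_{(a,b)}p$) yields $f(y+(a-b)e^{p})=f(y+(a-b)e^{q})$, which is precisely $f(x)=f(x^{pq})$.

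For transitivity, suppose $f(x)=f(x^{pq})$ and $f(x)=f(x^{qt})$ for all $x$, with $p,q,t$ distinct (if two of them coincide the conclusion is trivial). The key observation is the transposition identity $(p\,t)=(q\,t)(p\,q)(q\,t)$: swapping $p$ and $t$ is the same as swapping $q$ and $t$, then $p$ and $q$, then $q$ and $t$ again. Hence for any $x$ we have $f(x)=f(x^{qt})=f\big((x^{qt})^{pq}\big)=f\big(((x^{qt})^{pq})^{qt}\big)=f(x^{pt})$, where the three equalities use $qt$-invariance, then $pq$-invariance, then $qt$-invariance, and the last profile equals $x^{pt}$ by the identity above. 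Therefore $p\sim t$, and $\sim$ is an equivalence relation.

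The substantive step is the transitivity argument, and the reason the reformulation is needed is worth emphasizing. A direct attempt to chain $p\succeq_{(r,s)}q$ with $q\succeq_{(r,s)}t$ stalls: in a profile with $x_{p}=x_{t}=s$, the intermediate player $q$ can occupy an arbitrary position, so the hypotheses cannot be applied without first relocating $q$, and relocating $q$ appears to require exactly the $p$--$t$ comparison one is trying to establish. Passing to the global relation $\sim$ — equivalently, to full interchangeability — removes the obstruction, because invariance of $f$ under all $pq$- and $qt$-swaps forces invariance under $pt$-swaps purely by the identity $(p\,t)=(q\,t)(p\,q)(q\,t)$, with no further hypotheses on $f$.
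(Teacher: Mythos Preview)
Your proof is correct and takes a genuinely different route from the paper's.

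The paper proves transitivity of $\sim$ directly at the level of the local relations $\succeq_{(r,s)}$: assuming $i\sim j$ and $j\sim k$, it fixes $r>s$ and a profile $x$ with $x_i=x_k=s$, sets $t=x_j$, and then routes from $f(x+(r-s)e^{k})$ to $f(x+(r-s)e^{i})$ through a chain of intermediate profiles obtained by successive swaps with $j$. Because each swap in the chain must invoke one of the available relations $i\succeq j$, $j\succeq i$, $j\succeq k$, $k\succeq j$ at a pair of levels that actually occur in the current profile, the argument splits into five cases according to where $t$ sits relative to $r$ and $s$, each with its own three- or two-step chain.

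Your approach sidesteps this bookkeeping entirely. By first showing that $p\sim q$ is equivalent to full invariance of $f$ under the transposition $(p\,q)$, you reduce transitivity to the purely group-theoretic identity $(p\,t)=(q\,t)(p\,q)(q\,t)$, so a single three-swap chain works uniformly with no case split on the third player's position. The reformulation also makes reflexivity and symmetry completely transparent. What the paper's approach buys is that it stays closer to the primitive definitions and never needs the auxiliary characterization; what yours buys is brevity, conceptual clarity, and a proof that visibly does not depend on monotonicity of $f$ or on any structure of $T$ beyond being a set.
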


\begin{proof}
1) First, let us show that $i\succeq k$.

Let $s,r\in T$ such that $r>s$, and $x\in T^{N}$ such that $x_{i}=x_{k}=s$.
Pose $x_{j}=t$.

We need to show that $f(x+(r-s)e^{k})\leq f(x+(r-s)e^{i})$.

We denote by $x^{\ast }$ the following profile : $x_{u}^{\ast }=\left\{ 
\begin{array}{c}
x_{u}\text{ if }u\neq i,j,k \\ 
0\text{ if }u=i,j,k%
\end{array}%
\right. $

It follows from this notation that $x=x_{u}^{\ast }+se^{i}+se^{k}+te^{j}$
and $\left\{ 
\begin{array}{c}
x+(r-s)e^{k}=x^{\ast }+se^{i}+re^{k}+te^{j} \\ 
x+(r-s)e^{i}=x^{\ast }+se^{k}+re^{i}+te^{j}.%
\end{array}%
\right. $

We distinguish five cases.

\textbf{Case 1}: Assume that $t>r>s$. Then,

\begin{tabular}{lll}
$f(x+(r-s)e^{k})$ & $\leq $ & $f(x^{\ast }+se^{i}+re^{j}+te^{k})$ since $%
k\succeq j$ \\ 
& $\leq $ & $f(x^{\ast }+se^{j}+re^{i}+te^{k})$ since $i\succeq j$ \\ 
& $\leq $ & $f(x^{\ast }+se^{k}+re^{i}+te^{k})$ since $j\succeq k$ \\ 
& $=$ & $f(x+(r-s)e^{i})$.%
\end{tabular}

\textbf{Case 2}: Assume that $t=r$. Then,

\begin{tabular}{lll}
$f(x+(r-s)e^{k})$ & $\leq $ & $f(x^{\ast }+se^{j}+re^{k}+te^{i})$ since $%
i\succeq j$ \\ 
& $\leq $ & $f(x^{\ast }+se^{k}+re^{i}+te^{j})$ since $j\succeq k$ \\ 
& $=$ & $f(x+(r-s)e^{i})$.%
\end{tabular}

\textbf{Case 3}: Assume that $r>t>s$. Then,

\begin{tabular}{lll}
$f(x+(r-s)e^{k})$ & $\leq $ & $f(x^{\ast }+se^{i}+re^{j}+te^{k})$ since $%
j\succeq k$ \\ 
& $\leq $ & $f(x^{\ast }+se^{j}+re^{i}+te^{k})$ since $i\succeq j$ \\ 
& $\leq $ & $f(x^{\ast }+se^{k}+re^{i}+te^{j})$ since $j\succeq k$ \\ 
& $=$ & $f(x+(r-s)e^{i})$.%
\end{tabular}

\textbf{Case 4}: Assume that $t=s$. Then,

\begin{tabular}{lll}
$f(x+(r-s)e^{k})$ & $\leq $ & $f(x^{\ast }+se^{i}+re^{j}+te^{k})$ since $%
j\succeq k$ \\ 
& $\leq $ & $f(x^{\ast }+se^{j}+re^{i}+te^{k})$ since $i\succeq j$ \\ 
& $=$ & $f(x+(r-s)e^{i})$.%
\end{tabular}

\textbf{Case 5}: Assume that $s<t$. Then,

\begin{tabular}{lll}
$f(x+(r-s)e^{k})$ & $\leq $ & $f(x^{\ast }+se^{i}+re^{j}+te^{k})$ since $%
j\succeq k$ \\ 
& $\leq $ & $f(x^{\ast }+se^{j}+re^{i}+te^{k})$ since $k\succeq i$ \\ 
& $\leq $ & $f(x^{\ast }+se^{k}+re^{i}+te^{j})$ since $k\succeq j$ \\ 
& $=$ & $f(x+(r-s)e^{i})$.%
\end{tabular}

2) $k\succeq i$ is proved in the same manner as the proof of $i\succeq k$,
and we conclude that $i\sim k$.
\end{proof}

Our second result says that the relation $\succeq $ is not complete in
general, which implies that $\succeq $ is not a tournament (a tournament is
a binary relation that is complete and asymmetric).

\begin{proposition}
The relation $\succeq $ is not complete in general.
\end{proposition}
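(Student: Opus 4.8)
The plan is to exhibit a single concrete game ladder $(N,T,f)$ in which two players are incomparable under $\succeq$. The natural construction is to take the smallest non-trivial case: three positions, $T=\{1,2,3\}$, and a set of players $N$ with $|N|\ge 3$. I would then design the production function $f$ so that for one pair of positions $(r,s)$ player $p$ does strictly better than $q$, while for another pair $(r',s')$ player $q$ does strictly better than $p$; by the definition of $\succeq$ (which requires $p\succeq_{(r,s)}q$ for \emph{all} $r>s$), neither $p\succeq q$ nor $q\succeq p$ holds, so $\succeq$ is not complete.

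Concretely, I would first reduce the verification to a finite check: since $f$ is monotonic and $T$ is finite, the relation $\succeq_{(r,s)}$ depends only on the values of $f$ on the finitely many profiles obtained by fixing $x_p=x_q=s$ and the remaining coordinates arbitrarily, then adding $(r-s)e^p$ or $(r-s)e^q$. The cleanest route is to let $f$ depend on the position profile through a small number of ``thresholds'': for instance, arrange that player $p$ is valuable precisely when promoted from position $1$ to position $2$ (so $p\succ_{(2,1)}q$) but that player $q$ is valuable precisely when promoted from position $2$ to position $3$ (so $q\succ_{(3,2)}p$), with the two effects calibrated so that monotonicity of $f$ is preserved globally. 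I would then exhibit two explicit profiles witnessing $p\succeq_{(2,1)}q$ fails and another two witnessing $q\succeq_{(3,2)}p$ fails — actually, to be careful, I want $q\not\succeq_{(2,1)}p$ strictly (a profile where swapping strictly helps $p$) \emph{and} $p\not\succeq_{(3,2)}q$ strictly (a profile where swapping strictly helps $q$), which together kill completeness on the pair $\{p,q\}$.

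The main obstacle I anticipate is maintaining \emph{global} monotonicity of $f$ while creating these two conflicting directional preferences. A ``local bump'' that favors $p$ at the $(2,1)$ level and $q$ at the $(3,2)$ level must be embedded in a function that is nondecreasing in every coordinate simultaneously; a carelessly defined $f$ will violate $x\le z\Rightarrow f(x)\le f(z)$. I would handle this by writing $f$ as a sum of a large ``base'' monotone term (e.g.\ $M\sum_i x_i$ for large $M$) plus a small bounded perturbation encoding the conflict, and choosing $M$ large enough that the perturbation never overturns monotonicity along any single-coordinate increase. A second, minor point to check is that the incomparable pair genuinely has \emph{no} $\succeq$-edge in either direction — i.e.\ that I have produced strict witnesses on both sides rather than merely failing to produce a preference — but this follows immediately once the two perturbation bumps are made strict. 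With $f$ so constructed, $p$ and $q$ are $\succeq$-incomparable, establishing the proposition; and since a tournament must be complete, the same example shows $\succeq$ need not be a tournament.
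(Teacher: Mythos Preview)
Your proposal is correct: the only way to establish non-completeness is by counterexample, and your plan to build one via a large linear base term $M\sum_i x_i$ plus a bounded perturbation is sound --- the linear part forces global monotonicity for any sufficiently large $M$ (since any strict coordinatewise increase raises the sum by at least $1$), while the perturbation is free to create conflicting swap-preferences at the $(2,1)$ and $(3,2)$ levels. The paper follows the same overall route (a single explicit counterexample) but with a different construction: a seven-player, three-level game with a $\{0,1\}$-valued $f$ defined by a short list of extremal profiles, in the style of simple voting games, and then simply asserts that players $1$ and $3$ are incomparable. Your analytic construction makes monotonicity a one-line estimate in $M$ and generalizes easily; the paper's combinatorial construction is closer to the Isbell/simple-games tradition referenced in the introduction and reduces the incomparability check to a finite enumeration. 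Either route settles the proposition.
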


\begin{proof}
Consider the following game ladder where $N=\{1,2,3,4,5,6,7\}$, $T=\{1,2,3\}$%
, and $f$ is defined as follows:

For any profile $x$, $f(x)=\left\{ 
\begin{array}{l}
1\text{ if there exists }y\in I\text{ such that }x\leq y \\ 
0\text{ if not}%
\end{array}%
\right. $

where $I=\{(3,1,2,1,1,2,2),(1,3,2,1,1,2,2),(1,2,3,1,1,2,2)\}$.

It can be easily checked that $1$ and $3$ are not comparable.
\end{proof}

Our third result says that the relation $\succeq $ is not transitive in
general.

\begin{proposition}
The relation $\succeq $ is not transitive in general.
\end{proposition}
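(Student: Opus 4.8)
The plan is to exhibit an explicit counterexample, in the same spirit as the proof of the previous proposition: a game ladder $(N,T,f)$ with three position types $T=\{1,2,3\}$, in which $f$ is a monotone $\{0,1\}$-valued production function built from a small finite family $I$ of position profiles in the same way as in the proof of the previous proposition, together with three players $a,b,c$ for which $a\succeq b$ and $b\succeq c$ both hold while $a\succeq c$ fails. Three layers are genuinely necessary: when $|T|=2$ the relation $\succeq$ coincides with the classical desirability (replacement) relation, which is a preorder, so any counterexample must use a third position level available to the remaining players. In fact it suffices to make a single sub-relation $\succeq_{(r,s)}$ intransitive, since $a\not\succeq c$ only requires $a\not\succeq_{(r,s)}c$ for one pair $(r,s)$, while $a\succeq b$ and $b\succeq c$ already yield $a\succeq_{(r,s)}b$ and $b\succeq_{(r,s)}c$; for that pair the usual transitivity argument breaks precisely when a fourth player can be placed strictly above position $s$.

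Concretely, I would first fix a small player set (four or five players should suffice) and choose $I$ so that the ``winning'' region of $f$ is determined by $I$, monotonicity being automatic by construction. Next I would check $a\succeq b$: for each pair $(r,s)$ with $r>s$ and each placement $x$ of the remaining players with $x_a=x_b=s$, verify $f(x+(r-s)e^{a})\ge f(x+(r-s)e^{b})$. Since $f$ is $\{0,1\}$-valued this reduces to checking that every profile which becomes winning after promoting $b$ to $r$ also becomes winning after promoting $a$ to $r$, a fact that can be read directly off $I$; the verification of $b\succeq c$ has the same form. Finally I would display one triple $(r,s,x)$ with $x_a=x_c=s$ at which $f(x+(r-s)e^{c})=1$ but $f(x+(r-s)e^{a})=0$, obtained by placing an auxiliary player above $s$ so that the $a$-versus-$c$ comparison is no longer governed by the two-layer subproblem that made $a\succeq b$ and $b\succeq c$ go through.

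The main obstacle is the design of $I$: the requirements $a\succeq b$ and $b\succeq c$ are quantified over all position pairs and all configurations of the other players, so $I$ must respect both dominations uniformly, yet it must also admit the one configuration that reverses $a$ against $c$. I expect to resolve this by thinking of $a$ as dominating $b$ and $b$ as dominating $c$ ``through'' complementary position pairs, and then choosing the generating profiles so that for the direct $a$-versus-$c$ comparison one of those pairs, together with a suitably high placement of the auxiliary player, yields the opposite inequality. Once a consistent $I$ is found, every verification is a finite case check of the same elementary type used in the proof of the previous proposition, so I would simply record the witnessing profiles rather than enumerate all cases.
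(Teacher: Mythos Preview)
Your approach is essentially the paper's: exhibit a concrete monotone $\{0,1\}$-valued game ladder generated by a finite set $I$ and check that transitivity fails among three named players. The paper is more economical in that it simply reuses the seven-player example from the proof of Proposition~2 and observes that $1\succ 2$ and $2\succ 3$ hold while $1$ and $3$ are incomparable, so the example witnessing non-completeness already witnesses non-transitivity and no fresh $I$ needs to be designed.
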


\begin{proof}
Consider the same game as in the proof of Proposition 2. It can be easily
checked that $1\succ 2$, $2\succ 3$, but $not(1\succ 3)$ given that $1$ and $%
3$ are not comparable.
\end{proof}

Our fourth result states that the global influence relation $\succeq $ is
transitive if $\succeq $ is complete.

\begin{proposition}
Let $\left( N,T,f\right) $ be a linear game ladder. Then $\succeq $ is
transitive.
\end{proposition}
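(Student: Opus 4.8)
The plan is to reduce transitivity to a statement about $3$-cycles of $\succeq$. Suppose $i\succeq j$ and $j\succeq k$; we must show $i\succeq k$. Since the ladder is linear, $\succeq$ is complete, so either $i\succeq k$ — in which case there is nothing to prove — or $k\succeq i$; in the second case we are left with the cyclic chain $i\succeq j\succeq k\succeq i$. Hence it suffices to establish the following: in any game ladder, if $i\succeq j$, $j\succeq k$ and $k\succeq i$, then $i\succeq k$. Unwinding Definition 1, this amounts to showing that for all $s,r\in T$ with $r>s$ and every $x\in T^{N}$ with $x_{i}=x_{k}=s$ we have $f(x+(r-s)e^{k})\le f(x+(r-s)e^{i})$. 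Put $t=x_{j}$ and, exactly as in the proof of Proposition 1, let $x^{\ast}$ be the profile obtained from $x$ by setting the coordinates of $i,j,k$ to $0$, so that the inequality to be proved reads
\[
f\bigl(x^{\ast}+se^{i}+re^{k}+te^{j}\bigr)\ \le\ f\bigl(x^{\ast}+se^{k}+re^{i}+te^{j}\bigr).
\]

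The mechanism is the elementary observation already used in Proposition 1: if $p\succeq q$ and, in some profile, $q$ occupies a position $a$ lying strictly above the position $b$ of $p$, then exchanging $p$ and $q$ cannot decrease $f$, because this is exactly an instance of $p\succeq_{(a,b)}q$ with the remaining players playing the role of the fixed environment. So it is enough to pass from the left-hand profile to the right-hand profile through a chain of such value-non-decreasing exchanges, each of which moves only two of the three players $i,j,k$ and is licensed by one of the three relations $i\succeq j$, $j\succeq k$, $k\succeq i$ (always used so as to push the more influential member of the pair up past the less influential one).

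I would then argue by cases on the position $t$ of the third player $j$, in parallel with Proposition 1: (i) $t>r$, (ii) $t=r$, (iii) $s<t<r$, (iv) $t=s$, (v) $t<s$. In each case one writes down an explicit chain of two or three exchanges realizing the required transformation; for instance, in case (iii) one performs $j\leftrightarrow k$, then $i\leftrightarrow j$, then $j\leftrightarrow k$ (each step licensed by $j\succeq k$ or $i\succeq j$), while in case (i) one performs $i\leftrightarrow j$, then $i\leftrightarrow k$ (here using $k\succeq i$), then $j\leftrightarrow k$. In the boundary cases (ii) and (iv) two of the three players start at the same position and the chain shortens to two exchanges; one checks that no exchange is ever attempted between two co-located players.

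The main obstacle is the routing in the extreme cases (i) ($j$ above both $i$ and $k$) and (v) ($j$ below both): imitating the proof of Proposition 1 verbatim would, at one step, call for moving $j$ up past $i$, i.e. for the relation $j\succeq i$, which is not at our disposal — we only have the one-directional chain $i\succeq j\succeq k\succeq i$, not $i\sim j$ and $j\sim k$. The resolution is to reroute that exchange through player $k$ and invoke $k\succeq i$ instead; the only genuine work is to check that such a reordering of the exchanges succeeds, and terminates at the right-hand profile, in all five cases. This also makes transparent why the linearity hypothesis is indispensable: without completeness there need be no relation between $i$ and $k$ to break the $3$-cycle, and Proposition 3 indeed exhibits a non-transitive $\succeq$.
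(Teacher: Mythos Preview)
Your proposal is correct and follows essentially the same route as the paper: reduce via completeness to the cyclic situation $i\succeq j\succeq k\succeq i$, split into five cases on $t=x_j$, carry Cases~(ii)--(iv) over verbatim from Proposition~1 (they use only $i\succeq j$ and $j\succeq k$), and in the extreme Cases~(i) and~(v) reroute one exchange through $k\succeq i$. The chain you spell out for Case~(i) is exactly the paper's, and your diagnosis of why Proposition~1's chain fails there (it would need $k\succeq j$ or $j\succeq i$, available under $\sim$ but not here) is the right one.
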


\begin{proof}
Let $\left( N,T,f\right) $ be a linear game ladder, and $i,j$ and $k$ be
three players such that $i\succeq j$ and $j\succeq k.$ We will show that $%
i\succeq k.$ We distinguish two cases.

1) Assume that $not(k\succeq i)$. Then $i\succ k$ since $\left( N,T,f\right) 
$ is linear, and this implies that $i\succeq k$.

2) Assume that $k\succeq i$ : we shall show that in this condition, $%
k\succeq i$.

Let exist $s,r\in T$ such that $r>s$, and $x\in T^{N}$ such that $%
x_{i}=x_{k}=s$. Pose $x_{j}=t$.

We need to show that $f(x+(r-s)e^{k})\leq f(x+(r-s)e^{i})$.

As in Proposition 1, we denote by $x^{\ast }$ the profile such that $%
x_{u}^{\ast }=\left\{ 
\begin{array}{c}
x_{u}\text{ if }u\neq i,j,k \\ 
0\text{ if }u=i,j,k\text{.}%
\end{array}%
\right. $

Then, $x=x_{u}^{\ast }+se^{i}+se^{k}+te^{j}$ and $\left\{ 
\begin{array}{c}
x+(r-s)e^{k}=x^{\ast }+se^{i}+re^{k}+te^{j} \\ 
x+(r-s)e^{i}=x^{\ast }+se^{k}+re^{i}+te^{j}\text{.}%
\end{array}%
\right. $

We shall distinguish five cases as we did in Proposition 1.

\textbf{Case 1}: Assume $t>r>s$. Then,

\begin{tabular}{lll}
$f(x+(r-s)e^{k})$ & $\leq $ & $f(x^{\ast }+se^{j}+re^{k}+te^{i})$ since $%
i\succeq j$ \\ 
& $\leq $ & $f(x^{\ast }+se^{j}+re^{i}+te^{k})$ since $k\succeq i$ \\ 
& $\leq $ & $f(x^{\ast }+se^{k}+re^{i}+te^{j})$ since $j\succeq k$ \\ 
& $=$ & $f(x+(r-s)e^{i})$.%
\end{tabular}

\textbf{Case 2}: Assume $t=r$. The proof is exactly as in Proposition 1.

\textbf{Case 3:} Assume $r<t<s$. The proof is exactly as in Proposition 1.

\textbf{Case 4}: Assume $t=s$. The proof is exactly as in Proposition 1.

\textbf{Case 5}: Assume $s<t$. Then,

\begin{tabular}{lll}
$f(x+(r-s)e^{k})$ & $\leq $ & $f(x^{\ast }+se^{i}+re^{j}+te^{k})$ since $%
j\succeq k$ \\ 
& $\leq $ & $f(x^{\ast }+se^{k}+re^{j}+te^{i})$ since $k\succeq i$ \\ 
& $\leq $ & $f(x^{\ast }+se^{k}+re^{i}+te^{j})$ since $i\succeq j$ \\ 
& $=$ & $f(x+(r-s)e^{i})$.%
\end{tabular}
\end{proof}

Our fifth result states that, if $\succeq $ is complete, then if a player $i$
dominates another player $j$ and $j$ is equivalent to another player $k$,
then $i$ dominates $k$. Also, if a player $i$ is equivalent to another
player $j$ and $j$ dominates another player $k$, then $i$ dominates $k$.

\begin{proposition}
Let $\left( N,T,f\right) $ be a linear game ladder. If $i\succ j$ and $j\sim
k$, then $i\succ k$. Also, if $i\sim j$ and $j\succ k$, then $i\succ k$.
\end{proposition}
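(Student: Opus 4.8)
The plan is to derive this directly from the transitivity of $\succeq$ established in Proposition 4, using only the fact that $\succ$ and $\sim$ are, respectively, the asymmetric and symmetric components of $\succeq$. Thus $i\succ j$ means $i\succeq j$ while $j\succeq i$ fails, and $j\sim k$ means that both $j\succeq k$ and $k\succeq j$ hold; and since $(N,T,f)$ is linear, Proposition 4 is available.

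For the first implication, I would first note that from $i\succeq j$ and $j\succeq k$ (the latter being part of $j\sim k$), transitivity gives $i\succeq k$. It then suffices to show that $k\succeq i$ fails. Arguing by contradiction, suppose $k\succeq i$ held; then, combining $j\succeq k$ (part of $j\sim k$) with $k\succeq i$, transitivity would force $j\succeq i$, contradicting the hypothesis $i\succ j$. Hence $k\succeq i$ fails, and therefore $i\succ k$.

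For the second implication the argument is the mirror image: $i\succeq j$ (part of $i\sim j$) together with $j\succeq k$ gives $i\succeq k$ by transitivity, and if $k\succeq i$ held, then $k\succeq i$ with $i\succeq j$ (part of $i\sim j$) would yield $k\succeq j$ by transitivity, contradicting $j\succ k$. So $i\succ k$.

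I expect no real obstacle here: the statement is a purely order-theoretic corollary of Proposition 4, so the only thing to watch is the bookkeeping — tracking which ``fails-to-dominate'' clause comes from which strict comparison, and making sure that completeness enters exclusively through the citation of Proposition 4 (which genuinely requires it). In principle one could instead unwind everything back to the position-wise relations $\succeq_{(r,s)}$ and repeat the five-case analysis of Propositions 1 and 4, but that would be wasted effort given that transitivity is already in hand.
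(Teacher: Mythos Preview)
Your proof is correct and essentially matches the paper's argument: both derive the contradiction $j\succeq i$ from $j\succeq k$ and $k\succeq i$ via Proposition~4's transitivity. The only cosmetic difference is that the paper begins by assuming $not(i\succ k)$ and invokes completeness directly to obtain $k\succeq i$, whereas you first establish $i\succeq k$ by transitivity and then rule out $k\succeq i$; the contradiction step is identical in both.
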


\begin{proof}
Let $\left( N,T,f\right) $ be a linear game ladder. Let $i,j$ and $k$ be
three players such that $i\succ j$ and $j\sim k$. It follows from what
precedes that if $not(i\succ k)$ then $k\succeq i$. But $j\succeq k$ and $%
k\succeq i$ imply $j\succeq i$, which is a contradiction with $i\succ j$.

The second statement is proved similarly.
\end{proof}

Our sixth result states that, if $\succeq $ is complete, then if a player $i$
dominates another player $j$ and $j$ dominates another player $k$, then $i$
dominates $k$.

\begin{proposition}
Let $\left( N,T,f\right) $ be a linear game ladder. Then, $\succ $ is
transitve.
\end{proposition}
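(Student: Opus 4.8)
The plan is to combine the two transitivity results already established—Proposition 4 (transitivity of $\succeq$) and Proposition 5 (compatibility of $\succ$ with $\sim$)—so that no new case analysis on profiles is required. Suppose $i\succ j$ and $j\succ k$ in a linear game ladder. Since $i\succ j$ implies $i\succeq j$, and $j\succ k$ implies $j\succeq k$, Proposition 4 gives $i\succeq k$ immediately. It remains to rule out $k\succeq i$; once we do, completeness (linearity) forces $i\succ k$.

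So I would argue by contradiction: assume $k\succeq i$. Together with $i\succeq j$ this yields, by Proposition 4 again, $k\succeq j$. But $j\succ k$ means $j\succeq k$ and $not(k\succeq j)$, so $k\succeq j$ is a direct contradiction. Hence $not(k\succeq i)$, and with $i\succeq k$ already in hand together with linearity we conclude $i\succ k$. This is the whole argument; it is short precisely because the hard work has been front-loaded into Propositions 1 and 4, whose proofs contain the five-case profile manipulations.

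The only place to be careful is that $\succ$ is defined as the asymmetric component of $\succeq$, i.e. $i\succ j$ abbreviates ``$i\succeq j$ and $not(j\succeq i)$'', so each implication ``$i\succ j\Rightarrow i\succeq j$'' and each contradiction ``$k\succeq j$ contradicts $j\succ k$'' is just unpacking that definition. There is no real obstacle here; the main thing to verify is that Proposition 4's transitivity of $\succeq$ is genuinely available (it is, under the standing linearity hypothesis), since it is the engine that produces both $i\succeq k$ and the contradictory $k\succeq j$. One could alternatively route the argument through Proposition 5 (using $i\succ j$, $j\succ k\Rightarrow j\succeq k$, hence... ) but the direct use of Proposition 4 twice is cleaner, so that is the approach I would present.
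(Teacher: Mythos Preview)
Your proof is correct and follows essentially the same route as the paper's own argument: both argue by contradiction, use completeness to obtain $k\succeq i$, and then combine this with $i\succ j$ (via the transitivity of $\succeq$ from Proposition~4) to reach a conclusion that contradicts $j\succ k$. The only cosmetic difference is that the paper phrases the derived contradiction as $k\succ j$ whereas you stop at $k\succeq j$; since $j\succ k$ already entails $not(k\succeq j)$, your version is in fact the cleaner way to close the argument.
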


\begin{proof}
Assume that $i\succ j$, $j\succ k$ and $not(i\succ k)$. Then $k\succeq i$.
Since $k\succeq i$ and $i\succ j$, it follows that $k\succ j$, which
contradicts $j\succ k$.
\end{proof}

It follows from Propositions 1-6 that $\succeq $ is not a transitive
tournament in general. However, if $\succeq $ is complete, then it is a
complete preorder and its asymmetric component $\succ $ is transitive. This
implies that, if $\succeq $ is complete, then players can be partitioned
into a finite number of layers where players in the same layer are
equivalent and players in each layer dominate those in lower-level layers.
The results also imply that, if $\succeq $ is complete, then $\succ $ is a
finite union of transitive tournaments.

\begin{theorem}
Let $\left( N,T,f\right) $ be a linear game ladder. Then $\succeq $ is a
complete preorder and $\succ $ is a finite union of tournaments.
\end{theorem}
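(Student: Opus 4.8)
The plan is to assemble the statement from Propositions~1, 4 and 6, together with the finiteness of $N$. For the first assertion, reflexivity of $\succeq$ is immediate: swapping a player with himself leaves every position profile unchanged, so $p\succeq_{(r,s)}p$ holds trivially for all $r>s$, whence $p\succeq p$. Completeness is precisely the hypothesis that the ladder is linear, and transitivity is Proposition~4. Hence $\succeq$ is a complete preorder on $N$.

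For the second assertion I would argue as follows. By Proposition~1 the relation $\sim$ is an equivalence relation, so it partitions the finite set $N$ into finitely many classes $C_{1},\dots ,C_{m}$. Call a set $S\subseteq N$ a \emph{transversal} if $|S\cap C_{\ell}|=1$ for every $\ell$; there are exactly $\prod_{\ell}|C_{\ell}|$ transversals, in particular finitely many. The claim to be established is the identity
$$\succ \;=\;\bigcup_{S \text{ transversal}}\bigl(\succ \cap (S\times S)\bigr),$$
a finite union, in which every term is a transitive tournament on the corresponding set $S$.

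That each term is a transitive tournament on $S$ follows quickly: any two distinct members of a transversal lie in distinct $\sim$-classes, hence are not $\sim$-related, so by completeness of $\succeq$ exactly one of them dominates the other, which gives completeness on $S$ (asymmetry being automatic, since $\succ$ is by definition the asymmetric component); transitivity on $S$ is inherited from Proposition~6. For the displayed identity, the inclusion $\supseteq$ is trivial, and for $\subseteq$ one notes that if $i\succ j$ then $i$ and $j$ are not $\sim$-related, hence belong to distinct classes, so one may pick a transversal $S$ containing both $i$ and $j$, and then $(i,j)$ lies in the term indexed by $S$. This yields the theorem.

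The whole argument is bookkeeping once Propositions~1--6 are available; the only point requiring any care is the observation that $\succ$ never relates two members of the same $\sim$-class — so that every $\succ$-edge can be captured inside some transversal — which is immediate because $\succ$ and $\sim$ are the asymmetric and symmetric components of one and the same relation and are therefore disjoint. For completeness I would also remark that this decomposition is consistent with the ``layers'' picture stated before the theorem: using Propositions~5 and 6 the classes can be indexed so that every member of $C_{a}$ dominates every member of $C_{b}$ whenever $a<b$.
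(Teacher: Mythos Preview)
Your proposal is correct and follows the same route as the paper, which simply states that the theorem follows from Propositions~1--6. Your transversal construction makes explicit what the paper leaves to the reader: how the partition into $\sim$-classes yields $\succ$ as a finite union of (transitive) tournaments on the cross-sections.
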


\begin{proof}
The proof simply follows from the proofs of Propositions 1-6.
\end{proof}

\subsection{Tournament Rank and Pivotability}

In this section, we study the relationship between tournament rank, given by
the relation $\succeq $, and pivotability as defined by Freixas (2005a). Let 
$\left( N,T,f\right) $ be a game ladder, and $\{z_{1},z_{2},...,z_{k}\}$ the
range of $f$. Without loss of generality, we assume $z_{1}>z_{2}>...>z_{k}$.
Denote by $Q_{N}$ the set of all the bijections defined from $N$ onto $%
\left\{ 1,...,n\right\} $. An element of $Q_{N}$ is a possible order in
which players entire the game or the organization (players move from a type $%
1$ task to another position type or they stay in $1$). An ordered allocation
of positions of $N\ $is an ordered pair $R=\left( sR,x^{{\small R}}\right) ,$
where $sR\in Q_{N}$ and $x^{{\small R}}\in T^{N}$.

The $i$-pivotal player of $R$ for $i=1,$ $2,$ $...,$ $k-1$, denoted by $i$%
-piv$\left( R,f\right) $, is uniquely defined either as:

(a): the player whose action in $R$ clinches the outcome of $x^{R}$ under at
least the output level $z_{i}$, or

(b): the player whose action in $R$ clinches the outcome of $x^{R}$ under at
most the output level $z_{i+1}$.

In other words, $i$-piv$\left( R,f\right) $ is the first player in $sR$ who
satisfies one of the two following mutually exclusive conditions:

(1) independently of how tasks are allocated among all subsequent players,
the outcome will be $f_{h}$ with $f_{h}\geq f_{i}$, or

(2) no matter how all subsequent players were to change their actions, the
final outcome would be no greater than $f_{i+1}$.

For every $i\in \{1,$ $2,$ $...,$ $k-1\}$, denote by $\mathcal{R}%
_{ip}^{+}\left( f\right) $ the set of all the ordered allocation of
positionss for which player $p$ is $i$-pivotal.

We answer the question of whether the frequency with which a player is
pivotal reflects tournament rank. I

\begin{theorem}
Let $\left( N,T,f\right) $ be a game ladder, and $p,q$ $\in N$ two players.
Then, $p\succeq q\Longrightarrow \left\vert \mathcal{R}_{ip}^{+}\right\vert
\geq \left\vert \mathcal{R}_{iq}^{+}\right\vert $ for any $i\in \left\{ 1,%
\text{ }.\text{ }.\text{ }.,\text{ }k-1\right\} $.
\end{theorem}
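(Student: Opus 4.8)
The plan is to set up an injection from $\mathcal{R}_{iq}^{+}(f)$ into $\mathcal{R}_{ip}^{+}(f)$ by swapping the roles of $p$ and $q$ in each ordered allocation, and then to use the hypothesis $p\succeq q$ to show that this swap preserves $i$-pivotality. Given an ordered allocation $R=(sR,x^{R})$ in which $q$ is $i$-pivotal, I would define $R'=(sR',x^{R'})$ where $sR'$ is obtained from $sR$ by transposing $p$ and $q$ in the entry order, and $x^{R'}$ is obtained from $x^{R}$ by swapping the positions assigned to $p$ and $q$ (i.e. $x^{R'}_{p}=x^{R}_{q}$, $x^{R'}_{q}=x^{R}_{p}$, and $x^{R'}_{u}=x^{R}_{u}$ otherwise). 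The map $R\mapsto R'$ is clearly an involution, hence injective, so it suffices to prove that if $q$ is $i$-pivotal in $R$ then $p$ is $i$-pivotal in $R'$; counting then gives $|\mathcal{R}_{ip}^{+}|\ge|\mathcal{R}_{iq}^{+}|$.

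The core of the argument is to track what happens at the moment $q$ acts in $R$. Let $m$ be the position of $q$ in the order $sR$; because the players entering strictly before $q$ occupy the same positions in $R$ and $R'$ (none of them is $p$, since $p$ comes after $q$ in $sR'$ precisely when $q$ comes after $p$ in $sR$ — here I need to handle the two cases according to whether $p$ precedes or follows $q$ in $sR$), no subsequent player before $q$'s turn can have already been pivotal in $R'$ unless the same was true in $R$. When $q$ becomes the first player to satisfy condition (1) (the outcome is forced to be $\ge z_i$ regardless of later allocations) or condition (2) (the outcome is forced to be $\le z_{i+1}$), I want to show $p$ plays the analogous clinching role in $R'$. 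For condition (1): the set of completions of $x^{R}$ consistent with the history up to $q$'s move all yield $f\ge z_i$; applying $p\succeq q$ in the relevant pair of positions $(r,s)$ — where $s=x^{R}_{q}=x^{R'}_{p}$ is $p$'s assigned position and $r$ ranges over the positions higher than $s$ that later players might move to — monotonicity of $f$ combined with the pairwise domination $f(x+(r-s)e^{p})\ge f(x+(r-s)e^{q})$ forces every corresponding completion in $R'$ to also satisfy $f\ge z_i$. Condition (2) is handled symmetrically, using $p\succeq q$ to push the upper bound $z_{i+1}$ through.

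The main obstacle I anticipate is bookkeeping around the order $sR$ and the fact that $p\succeq q$ is a statement about pairs $(r,s)$ with $r>s$, whereas in an arbitrary ordered allocation the positions actually realized by $p$ and $q$ need not satisfy $x^{R}_{p}<x^{R}_{q}$ or any fixed relation — so I will need to invoke the $(r,s)$-domination inequality in the correct direction, possibly splitting on whether $x^{R}_{p}\ge x^{R}_{q}$ or $x^{R}_{p}<x^{R}_{q}$, and in the former subcase argue directly from monotonicity alone. A second delicate point is that $i$-pivotality is defined as being the \emph{first} player in $sR$ satisfying (1) or (2); I must check not only that $p$ eventually clinches in $R'$ but that no player earlier than $p$ in $sR'$ does so — this is where the structural fact that earlier players hold identical positions (except possibly $q$, whose new position equals $p$'s old one) must be combined with the domination hypothesis to rule out a premature clinch. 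Once these case distinctions are organized, each individual implication reduces to a one- or two-line application of monotonicity and Definition 1, much as in the case analyses of Propositions 1 and 4.
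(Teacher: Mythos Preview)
Your overall strategy---build an injection $\mathcal{R}_{iq}^{+}\to\mathcal{R}_{ip}^{+}$ by swapping $p$ and $q$---is exactly what the paper does. But the specific map you propose, the full swap $R\mapsto R_{pq}$ (transpose both the entry order and the assigned positions), is not the map the paper uses, and there is a real obstruction to making it work as stated.

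The difficulty is the one you flag but do not resolve. Write $\tau$ for the $(p,q)$-transposition on profiles. Your swap amounts to asking whether $q$ being $i$-pivotal for $f$ at $R$ forces $q$ to be $i$-pivotal for $f\circ\tau$ at $R$. The hypothesis $p\succeq q$ gives only a one-sided comparison: $f(x)\ge (f\circ\tau)(x)$ when $x_{p}\ge x_{q}$ and the reverse inequality when $x_{p}\le x_{q}$. Pivotality, however, is a two-sided condition (one profile must land at or above $z_{i}$, another strictly below), and the relevant extremal profiles $x^{R_{q}^{j}}$, $x^{R_{q}^{1}}$ together with their ``predecessor'' modifications do not all have the same sign of $x_{p}-x_{q}$. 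In particular, when $sR(p)<sR(q)$ and the sign of $x_{p}^{R}-x_{q}^{R}$ is unfavourable for the branch of pivotality that $q$ realises, the domination inequality points the wrong way and one cannot conclude that $p$ is $i$-pivotal in $R_{pq}$. Your suggestion to ``argue directly from monotonicity alone'' in one subcase does not help: monotonicity is symmetric in $p$ and $q$ and cannot by itself transfer a clinching condition from $q$ to $p$.

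The paper handles this by using a \emph{case-dependent} map: sometimes $R_{pq}$ (swap order and positions), sometimes $R_{pq}^{0}$ (swap only the order, keep the profile $x^{R}$ unchanged). The point of $R_{pq}^{0}$ is that the extremal profiles $x^{(R_{pq}^{0})_{p}^{j}}$ and $x^{(R_{pq}^{0})_{p}^{1}}$ coincide with $x^{R_{q}^{j}}$ and $x^{R_{q}^{1}}$, so one of the two pivotality inequalities transfers for free; the domination hypothesis is then used only for the other inequality, in the favourable direction. Establishing that this hybrid map is well-defined requires the paper's Lemmas~3--5, and---because $R_{pq}^{0}$ is not an involution---injectivity is no longer automatic and demands the lengthy case analysis in the paper's proof. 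Your sketch would need to be substantially reorganised along these lines; the single uniform swap, as proposed, does not close.
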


The proof of Theorem 2 is quite involved and is presented in Section 3.3.
This theorem proves that a player's pivotability is a weakly increasing
function of his tournament rank, which justifies the use of tournament in
organizations.

\subsection{\textbf{Proof of Theorem 2}}

\bigskip To prove Theorem 2, a few definitions and five preliminary results
will be needed.

\subsubsection{Preliminary Definitions and Results for the Proof of Theorem 2%
}

Let $R=\left( sR,\text{ }x^{{\small R}}\right) $ be an ordered allocation of
positions. Let $p$ be a player. $R_{p}^{j}=(sR_{p}^{j},$ $x^{{\small R}_{%
{\small p}}^{{\small j}}})$ is the ordered allocation of positions in which
the order in which players enter the organization coincides with that of $R$
(i.e. $sR_{p}^{j}=sR$) and player $p$ as well as any player $q$ who enters
before $p$ occupies the exact same position as in $R$ (i.e. $x_{q}^{{\small R%
}_{{\small p}}^{{\small j}}}=x_{q}^{{\small R}}$\ if $sR\left( q\right) \leq
sR\left( p\right) $) whereas players who enter after $p$ are assigned the
highest position (i.e. $x_{q}^{{\small R}_{{\small p}}^{{\small j}}}=j$\ if $%
sR\left( q\right) >sR\left( p\right) $). Similarly, $R_{p}^{1}=\left(
sR_{p}^{1},x^{{\small R}_{{\small p}}^{{\small 1}}}\right) $ is the ordered
allocation of positions in which the order in which players enter the
organization coincides with that of $R$ (i.e. $sR_{p}^{1}=sR$) and player $p$
as well as any player $q$ who enters before $p$ occupies the exact same
position as in $R$ (i.e. $x_{q}^{{\small R}_{{\small p}}^{1}}=x_{q}^{{\small %
R}}$\ if $sR\left( q\right) \leq sR\left( p\right) $) whereas players who
enter after $p$ are assigned the lowest position (i.e. $x_{q}^{{\small R}_{%
{\small p}}^{1}}=1$\ if $sR\left( q\right) >sR\left( p\right) $). These
definitions are formalized in Notation 1 below, where we also define the set
of all ordered allocation of positionss in which a player is pivotal.

\begin{notation}
Let $\left( N,T,f\right) $ be a game ladder, $R=\left( sR,\text{ }%
x^{R}\right) $ an ordered allocation of positions, and $p$ a player.

$i)$ We denote by $R_{p}^{j}=(sR_{p}^{j},$ $x^{{\small R}_{{\small p}}^{%
{\small j}}})$ the ordered allocation of positions defined by:

$sR_{p}^{j}=sR,$ and $\forall q\in N,$ $x_{q}^{{\small R}_{{\small p}}^{%
{\small j}}}=\left\{ 
\begin{array}{l}
x_{q}^{{\small R}}\text{ \ \ \ \ \ \ \ \ if }sR\left( q\right) \leq sR\left(
p\right) \\ 
j\text{ \ \ \ \ \ \ \ \ \ \ if }sR\left( q\right) >sR\left( p\right) \text{.}%
\end{array}%
\right. $

$ii)$ We denote by $R_{p}^{1}=\left( sR_{p}^{1},x^{{\small R}_{{\small p}}^{%
{\small 1}}}\right) $ the ordered allocation of positions defined by:

$sR_{p}^{1}=sR,$ and for all $q\in N$, $x_{q}^{{\small R}_{{\small p}}^{%
{\small 1}}}=\left\{ 
\begin{array}{l}
x_{q}^{{\small R}}\text{ \ \ \ \ \ \ \ \ \ if }sR\left( q\right) \leq
sR\left( p\right) \\ 
1\text{\ \ \ \ \ \ \ \ \ \ \ \ if }sR\left( q\right) >sR\left( p\right) 
\text{.}%
\end{array}%
\right. $

$iii)$ For any $i=1,$ $2,$ $.$ $.$ $.,$ $k-1$, $\mathcal{R}_{ip}^{+}\left(
f\right) $ denotes the set of all the ordered allocation of positionss for
which player $p$ is $i$-pivotal. We pose: 
\begin{equation*}
\mathcal{R}_{p}\left( f\right) =\dbigcup\limits_{i=1}^{k-1}\text{ }\mathcal{R%
}_{ip}^{+}\left( f\right) \text{.}
\end{equation*}
\end{notation}

\bigskip

We also define the notion of agreement between two ordered allocations of
positions. Let $R=(sR,x^{R})$ and $R^{\prime }=(sR^{\prime },x^{R^{\prime }})
$ two ordered allocations of positions in a game ladder $(N,T,f)$, and let $p
$ be a player. We say that $R$ and $R^{\prime }$

agree up to the player $p$ if the order in which the players enter is the
same for $R$ and $R^{\prime }$ (i.e. $sR=sR^{\prime }$) and $p$ as well as
any player who enters before him occupies the same position in $R$ as in $%
R^{\prime }$(i.e. $x_{q}^{{\small R}_{{\small p}}^{{\small j}}}=x_{q}^{%
{\small R}}$\ if $sR\left( q\right) \leq sR\left( p\right) $). Formally, $R$
and $R^{\prime }$ agree up to the player $p$ if $\left\{ 
\begin{array}{l}
sR=sR^{\prime }\text{\ and }q\leq sR\left( p\right)  \\ 
x_{q}^{R}=x_{p}^{R^{\prime }}\text{ for any }q\text{\ such that }sR\left(
q\right) \leq sR\left( p\right) \text{.}%
\end{array}%
\right. $

We denote by $\mathcal{A}\left( R,p\right) $ the set of all ordered
allocations of positions which agree with $R$ up to $p$.

The following remark is important.

\begin{remark}
Let $R$ be an ordered allocation of positions. $\forall R^{\prime }\in 
\mathcal{A}\left( R,p\right) $, $x^{{\small R}_{{\small p}}^{{\small j}%
}}\leq x^{{\small R}^{\prime }}$ and $x^{{\small R}^{\prime }}\leq x^{%
{\small R}_{{\small p}}^{{\small 1}}}$.
\end{remark}

Denote by $prec\left( p,sR\right) $ the player who enters the organization
right before player $p$ in an ordered allocation of positions $R$. We remark
that the definition of a $i$-pivotal player can be stated as follows:

A player $p$ is $i$-pivotal in $R$ if:

\begin{equation*}
\left\{ 
\begin{array}{l}
1)\text{ }\forall R^{\prime }\in \mathcal{A}\left( R,p\right) ,\text{ }%
f\left( x^{{\small R}^{\prime }}\right) \geq z_{i} \\ 
2)\text{ }sR\left( p\right) \neq 1\Rightarrow \exists R^{\prime }\in 
\mathcal{A}\left( R,prec\left( p,sR\right) \right) :f\left( x^{{\small R}%
^{\prime }}\right) <z_{i}%
\end{array}%
\right.
\end{equation*}%
or 
\begin{equation*}
\left\{ 
\begin{array}{l}
1)\text{ }\forall R^{\prime }\in \mathcal{A}\left( R,p\right) ,\text{ }%
f\left( x^{{\small R}^{\prime }}\right) <z_{i} \\ 
2)\text{ }sR\left( p\right) \neq 1\Rightarrow \exists R^{\prime }\in 
\mathcal{A}\left( R,\text{ }prec\left( p,sR\right) \right) :f\left( x^{%
{\small R}^{\prime }}\right) \geq z_{i}.%
\end{array}%
\right.
\end{equation*}

\bigskip

The following lemma provides a full characterization of $\mathcal{R}%
_{ip}^{+}\left( f\right) $.

\begin{lemma}
Let $\left( N,T,f\right) $ be a game ladder, $p$ a player, $R$ an ordered
allocation of positions, and $i\in \left\{ 1,2,...,k-1\right\} .$%
\begin{equation*}
R\in \text{ }\mathcal{R}_{ip}^{+}\left( f\right) \Longleftrightarrow \left\{ 
\begin{array}{l}
f(x^{{\small R}_{{\small p}}^{{\small j}}})\geq z_{i} \\ 
f\left( x^{{\small R}_{{\small p}}^{{\small j}}}+(j-x_{p}^{{\small R}_{%
{\small p}}^{{\small j}}})e^{p}\right) <z_{i}%
\end{array}%
\right. \text{ or }\left\{ 
\begin{array}{l}
f\left( x^{{\small R}_{{\small p}}^{{\small 1}}}\right) <z_{i} \\ 
f\left( x^{{\small R}_{{\small p}}^{{\small 1}}}+(1-x_{p}^{{\small R}_{%
{\small p}}^{{\small 1}}})e^{p}\right) \geq z_{i}\text{.}%
\end{array}%
\right.
\end{equation*}
\end{lemma}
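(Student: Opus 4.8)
The plan is to prove the lemma as a chain of equivalences, starting from the reformulation of ``$p$ is $i$-pivotal in $R$'' displayed immediately before the statement. Two observations drive every step. First, monotonicity of $f$ gives the following reduction: if $X\subseteq T^{N}$ is a set of profiles that pairwise agree on a fixed set of coordinates and range over all of $T$ on the remaining ones, then ``$f(x)\ge z_{i}$ for every $x\in X$'' holds iff it holds at the pointwise-least element of $X$, and ``$f(x)<z_{i}$ for every $x\in X$'' holds iff it holds at the pointwise-greatest element of $X$. Second, the definitions in Notation 1 (this is the content of Remark 1) identify the extremal elements of the sets that occur here: among all $R'\in\mathcal{A}(R,p)$, the profile $x^{R_p^1}$ is pointwise least and $x^{R_p^j}$ pointwise greatest. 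I would also note once, for use throughout, that since the range of $f$ is the finite chain $z_{1}>z_{2}>\cdots>z_{k}$, the condition ``$f(\cdot)\le z_{i+1}$'' coincides with ``$f(\cdot)<z_{i}$''; this is what lets the clause phrased with $z_{i+1}$ in the definition of an $i$-pivotal player be matched with the clauses phrased with $z_{i}$ in the lemma.

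With these facts, the universally quantified clauses of the reformulated definition collapse to single inequalities: ``$\forall R'\in\mathcal{A}(R,p),\ f(x^{R'})\ge z_{i}$'' becomes an inequality at $x^{R_p^1}$, and ``$\forall R'\in\mathcal{A}(R,p),\ f(x^{R'})<z_{i}$'' becomes one at $x^{R_p^j}$. For the existential clauses I would first describe $\mathcal{A}(R,prec(p,sR))$: it is $\mathcal{A}(R,p)$ with player $p$'s own position additionally freed, so the pointwise-least and pointwise-greatest profiles it realizes are obtained by sending $p$ and every later entrant to position $1$, namely $x^{R_p^1}+(1-x_p^{R_p^1})e^{p}$, and to position $j$, namely $x^{R_p^j}+(j-x_p^{R_p^j})e^{p}$, respectively. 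Hence ``$\exists R'\in\mathcal{A}(R,prec(p,sR)),\ f(x^{R'})<z_{i}$'' and ``$\exists R'\in\mathcal{A}(R,prec(p,sR)),\ f(x^{R'})\ge z_{i}$'' reduce to single inequalities at these two profiles. Substituting the four resulting equivalences into the two alternative forms of the reformulated definition produces exactly the two alternatives of the lemma.

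It remains to dispose of the boundary case $sR(p)=1$, in which $prec(p,sR)$ is undefined and the second (existential) clause of each alternative in the reformulated definition is vacuous. Here I would just check that the matching inequality in the lemma is automatic: when $p$ enters first, $x^{R_p^1}+(1-x_p^{R_p^1})e^{p}$ is the minimal profile (every coordinate $1$), whose value is the global minimum $z_{k}$, and $z_{k}<z_{i}$ because $i\le k-1$; symmetrically $x^{R_p^j}+(j-x_p^{R_p^j})e^{p}$ is the maximal profile, whose value is the global maximum $z_{1}\ge z_{i}$. So in this case each alternative of the lemma reduces to its first clause, in agreement with the reformulated definition.

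I expect the mathematics to be entirely routine once this correspondence is in place; the only real work is bookkeeping — keeping the two flavours of pivotality (clinching the outcome under at least $z_{i}$ versus under at most $z_{i+1}$) apart, and matching each universal or existential clause to the correct one of the four extremal profiles. Stating the monotonicity reduction and the extremal profiles of $\mathcal{A}(R,p)$ and $\mathcal{A}(R,prec(p,sR))$ cleanly, once, turns the rest into substitution.
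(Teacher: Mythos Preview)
Your plan is the paper's argument, packaged more cleanly as a single chain of equivalences rather than two implications with nested case splits: collapse each quantified clause of the reformulated definition to a single inequality at the appropriate extremal profile in $\mathcal{A}(R,p)$ or $\mathcal{A}(R,prec(p,sR))$, then handle $sR(p)=1$ separately.

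However, your bookkeeping has the orientation reversed, and this is not cosmetic. You write that among $R'\in\mathcal{A}(R,p)$ the profile $x^{R_p^1}$ is pointwise least and $x^{R_p^j}$ pointwise greatest, and you attribute this to Remark~1; but Remark~1 records the \emph{opposite} inequalities, $x^{R_p^j}\le x^{R'}$ and $x^{R'}\le x^{R_p^1}$, and the paper's proof uses them in that direction throughout. With your orientation the substitution you describe yields the statement of the lemma with $R_p^1$ and $R_p^j$ interchanged, not the statement as written. The same reversal breaks your boundary-case check: under your reading, when $sR(p)=1$ the profile $x^{R_p^j}+(j-x_p^{R_p^j})e^{p}$ is the maximal profile with value $z_1$, so the second clause of the lemma's first alternative would read $z_1<z_i$, which is \emph{false} rather than automatic; symmetrically the second clause of the second alternative would read $z_k\ge z_i$, again false. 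With the orientation of Remark~1, that same profile is the global minimum with value $z_k<z_i$, and both second clauses become vacuous exactly as you intend. So the strategy is correct and matches the paper's, but you must adopt the ordering convention actually stated in Remark~1 (and used consistently in the paper's proof) for the substitution to land on the stated lemma and for the $sR(p)=1$ case to go through.
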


\begin{proof}
$\Longrightarrow )$ Suppose that $R\in $ $\mathcal{R}_{ip}^{+}\left(
f\right) $ and let us show the following: 
\begin{equation*}
\left\{ 
\begin{array}{l}
f(x^{{\small R}_{{\small p}}^{{\small j}}})\geq z_{i} \\ 
f\left( x^{{\small R}_{{\small p}}^{{\small j}}}+(j-x_{p}^{{\small R}_{%
{\small p}}^{{\small j}}})e^{p}\right) <z_{i}%
\end{array}%
\right. \text{ or }\left\{ 
\begin{array}{l}
f\left( x^{{\small R}_{{\small p}}^{{\small 1}}}\right) <z_{i} \\ 
f\left( x^{{\small R}_{{\small p}}^{{\small 1}}}+(1-x_{p}^{{\small R}_{%
{\small p}}^{{\small 1}}})e^{p}\right) \geq z_{i}%
\end{array}%
\right.
\end{equation*}%
Two cases are possible : $sR\left( p\right) \neq 1$ and $sR\left( p\right)
=1 $

\textbf{Case 1} : $sR\left( p\right) \neq 1.$

\begin{itemize}
\item If $\left\{ 
\begin{array}{l}
\forall R^{\prime }\in \mathcal{A}(R,p),\text{ }f\left( x^{{\small R}%
^{\prime }}\right) \geq z_{i} \\ 
\exists R^{\prime }\in \mathcal{A}(R,\text{ }prec(p,sR)):f\left( x^{{\small R%
}^{\prime }}\right) <z_{i}%
\end{array}%
\right. $ then, given that $R_{p}^{j}\in \mathcal{A}\left( R,p\right) $, we
have 
\begin{equation*}
f\left( x^{{\small R}_{{\small p}}^{{\small j}}}\right) \geq z_{i}\text{.}
\end{equation*}
\end{itemize}

Let the ordered allocation of positions $R_{1}=\left( sR_{1},x^{{\small R}_{%
{\small 1}}}\right) $ be such that 
\begin{equation*}
sR_{1}=sR
\end{equation*}

and%
\begin{equation*}
x^{{\small R}_{{\small 1}}}=x^{{\small R}_{{\small p}}^{{\small j}%
}}+(j-x_{p}^{{\small R}_{{\small p}}^{{\small j}}})e^{p}\text{.}
\end{equation*}

We have $sR_{1}=sR=sR^{\prime }$. In addition, $x^{{\small R}_{{\small 1}%
}}\leq x^{{\small R}^{\prime }}$ by definition of $R_{1}$ and $R^{\prime }$.
Therefore, \newline
\begin{equation*}
f(x^{{\small R}_{{\small p}}^{{\small j}}}+(j-x_{p}^{{\small R}_{{\small p}%
}^{{\small j}}})e^{p})\leq f(x^{{\small R}^{\prime }})<z_{i}.
\end{equation*}

\begin{itemize}
\item If$\left\{ 
\begin{array}{l}
\forall R^{\prime }\in \mathcal{A}\left( R,p\right) ,\text{ }f\left( x^{%
{\small R}^{\prime }}\right) <z_{i} \\ 
\exists R^{\prime }\in \mathcal{A}\left( R,prec\left( p,sR\right) \right)
:f\left( x^{{\small R}^{\prime }}\right) \geq z_{i}%
\end{array}%
\right. $then, given that $R_{p}^{1}\in \mathcal{A}\left( R,p\right) ,$ we
have
\end{itemize}

\begin{equation*}
f\left( x^{{\small R}_{{\small p}}^{{\small 1}}}\right) <z_{i}.
\end{equation*}

Let the ordered allocation of positions $R_{2}=\left( sR_{2},x^{{\small R}_{%
{\small 2}}}\right) $ be such that 
\begin{equation*}
sR_{2}=sR
\end{equation*}

and%
\begin{equation*}
x^{{\small R}_{{\small 2}}}=x^{{\small R}_{{\small p}}^{{\small 1}%
}}+(1-x_{p}^{{\small R}_{{\small p}}^{{\small 1}}})e^{p}\text{.}
\end{equation*}

We have, by definition of $R_{2}$ and $R^{\prime }$ 
\begin{equation*}
sR_{2}=sR=sR^{\prime }\text{ and }x^{{\small R}^{\prime }}\leq x^{{\small R}%
_{{\small 2}}}\text{.}
\end{equation*}

It follows that%
\begin{equation*}
f(x^{{\small R}_{{\small p}}^{{\small 1}}}+(1-x_{p}^{{\small R}_{{\small p}%
}^{{\small 1}}})e^{p})\geq f(x^{{\small R}^{\prime }})\geq z_{i}\text{.}
\end{equation*}%
\medskip

\textbf{Case 2} : $sR\left( p\right) =1$.

\begin{itemize}
\item If $\forall R^{\prime }\in \mathcal{A}\left( R,p\right) ,$ $f\left( x^{%
{\small R}^{\prime }}\right) \geq z_{i}$, then $R_{p}^{j}\in \mathcal{A}%
\left( R,p\right) ,$ and we have
\end{itemize}

\begin{equation*}
f(x^{{\small R}_{{\small p}}^{{\small j}}})\geq z\text{.}
\end{equation*}

We know that there exists a task profile $x$ such that $f\left( x\right)
=z_{k}$.

Suppose that $f(x^{{\small R}_{{\small p}}^{{\small j}}}+(j-x_{p}^{{\small R}%
_{{\small p}}^{{\small j}}})e^{p})\geq z_{i}$. Given that for any $y\in
T^{N} $, the following inequality holds 
\begin{equation*}
x^{{\small R}_{{\small p}}^{{\small j}}}+(j-x_{p}^{{\small R}_{{\small p}}^{%
{\small j}}})e^{p}\leq y
\end{equation*}

it follows that

\begin{equation*}
x^{{\small R}_{{\small p}}^{{\small j}}}+(j-x_{p}^{{\small R}_{{\small p}}^{%
{\small j}}})e^{p}\leq x
\end{equation*}

and by monotonicity, we have 
\begin{equation*}
f(x^{{\small R}_{{\small p}}^{{\small j}}}+(j-x_{p}^{{\small R}_{{\small p}%
}^{{\small j}}})e^{p})\leq f\left( x\right) =z_{k}\text{.}
\end{equation*}%
Therefore, $z_{k}\geq z_{i}$, which is contradictory since $z_{i}>z_{k}$. We
conclude that 
\begin{equation*}
f(x^{{\small R}_{{\small p}}^{{\small j}}}+(j-x_{p}^{{\small R}_{{\small p}%
}^{{\small j}}})e^{p})<z_{i}.
\end{equation*}

\begin{itemize}
\item If $\forall R^{\prime }\in \mathcal{A}\left( R,p\right) $, $f\left(
x^{R^{\prime }}\right) <z_{i}$, then, given that $R_{p}^{1}\in \mathcal{A}%
\left( R,p\right) ,$ we have
\end{itemize}

\begin{equation*}
f\left( x^{{\small R}_{{\small p}}^{{\small 1}}}\right) <z_{i}\text{.}
\end{equation*}

We know that there exists a task profile $x$ such that $f\left( x\right)
=z_{1}$.

Suppose that $f(x^{{\small R}_{{\small p}}^{{\small 1}}}+(1-x_{p}^{{\small R}%
_{{\small p}}^{{\small 1}}})e^{p})<z_{i}$. Then, given that for any $y\in
T^{N}$, the following inequality holds 
\begin{equation*}
y\leq x^{{\small R}_{{\small p}}^{{\small 1}}}+(1-x_{p}^{{\small R}_{{\small %
p}}^{{\small 1}}})e^{p}
\end{equation*}

we have 
\begin{equation*}
x\leq x^{{\small R}_{{\small p}}^{{\small 1}}}+(1-x_{p}^{{\small R}_{{\small %
p}}^{{\small 1}}})e^{p}
\end{equation*}

and by monotonicity, 
\begin{equation*}
z_{1}=f\left( x\right) \leq f(x^{{\small R}_{{\small p}}^{{\small 1}%
}}+(1-x_{p}^{{\small R}_{{\small p}}^{{\small 1}}})e^{p})\text{.}
\end{equation*}%
That is, 
\begin{equation*}
f(x^{{\small R}_{{\small p}}^{{\small 1}}}+(j-x_{p}^{{\small R}_{{\small p}%
}^{{\small 1}}})e^{p})\geq z_{1}\text{.}
\end{equation*}

And therefore, we have $z_{1}<z_{i}$, which is contradictory since $%
z_{1}\geq z_{i}$. Hence 
\begin{equation*}
f(x^{{\small R}_{{\small p}}^{{\small 1}}}+(j-x_{p}^{{\small R}_{{\small p}%
}^{{\small 1}}})e^{p})\geq z_{i}\text{.}
\end{equation*}

We conclude that 
\begin{equation*}
\left\{ 
\begin{array}{l}
f(x^{{\small R}_{{\small p}}^{{\small j}}})\geq z_{i} \\ 
f(x^{{\small R}_{{\small p}}^{{\small j}}}+(j-x_{p}^{{\small R}_{{\small p}%
}^{{\small j}}})e^{p})<z_{i}%
\end{array}%
\right. \text{ or }\left\{ 
\begin{array}{l}
f\left( x^{{\small R}_{{\small p}}^{{\small 1}}}\right) <z_{i} \\ 
f\left( x^{{\small R}_{{\small p}}^{{\small 1}}}+(j-x_{p}^{{\small R}_{%
{\small p}}^{{\small 1}}})e^{p}\right) \geq z_{i}\text{.}%
\end{array}%
\right.
\end{equation*}

$\Longleftarrow )$ Conversely, assume $\left\{ 
\begin{array}{l}
f(x^{{\small R}_{{\small p}}^{{\small j}}})\geq z_{i} \\ 
f(x^{{\small R}_{{\small p}}^{{\small j}}}+(j-x_{p}^{{\small R}_{{\small p}%
}^{{\small j}}})e^{p})<z_{i}%
\end{array}%
\right. $ or $\left\{ 
\begin{array}{l}
f\left( x^{{\small R}_{{\small p}}^{{\small 1}}}\right) <z_{i} \\ 
f\left( x^{{\small R}_{{\small p}}^{{\small 1}}}+(j-x_{p}^{{\small R}_{%
{\small p}}^{{\small 1}}})e^{p}\right) \geq z_{i}%
\end{array}%
\right. $ and let us show that $R\in $ $\mathcal{R}_{ip}^{+}\left( f\right) $%
.

We shall consider two cases : $sR\left( p\right) \neq 1$ and $sR\left(
p\right) =1$.

\medskip

\textbf{Case 1}: $sR\left( p\right) \neq 1.$

Let us show that 
\begin{equation*}
\left\{ 
\begin{array}{l}
\forall R^{\prime }\in \mathcal{A}\left( R,p\right) ,\text{ }f\left(
x^{R^{\prime }}\right) \geq z_{i} \\ 
\exists R^{\prime }\in \mathcal{A}\left( R,prec\left( p,sR\right) \right)
:f\left( x^{R^{\prime }}\right) <z_{i}%
\end{array}%
\right.
\end{equation*}%
or \ 
\begin{equation*}
\left\{ 
\begin{array}{l}
\forall R^{\prime }\in \mathcal{A}\left( R,p\right) ,\text{ }f\left(
x^{R^{\prime }}\right) <z_{i} \\ 
\exists R^{\prime }\in \mathcal{A}\left( R,prec\left( p,sR\right) \right)
:f\left( x^{R^{\prime }}\right) \geq z_{i}%
\end{array}%
\right.
\end{equation*}

\begin{itemize}
\item If $\left\{ 
\begin{array}{l}
f(x^{{\small R}_{{\small p}}^{{\small j}}})\geq z_{i} \\ 
f(x^{{\small R}_{{\small p}}^{{\small j}}}+(j-x_{p}^{{\small R}_{{\small p}%
}^{{\small j}}})e^{p})<z_{i}%
\end{array}%
\right. $, then, since for all $R^{\prime }\in \mathcal{A}\left( R,p\right) $%
, we have
\end{itemize}

\ 
\begin{equation*}
x^{{\small R}_{{\small p}}^{{\small j}}}\leq x^{R^{\prime }}andf(x^{{\small R%
}_{{\small p}}^{{\small j}}})\geq z_{i}
\end{equation*}

and by monotonicity, 
\begin{equation*}
f\left( x^{R^{\prime }}\right) \geq z_{i},\text{ }\forall R^{\prime }\in 
\mathcal{A}\left( R,p\right) .
\end{equation*}

Pose $R^{\prime }=(sR^{\prime },x^{R^{\prime }})$ with $sR^{\prime }=sR$ and 
$x^{R^{\prime }}=x^{{\small R}_{{\small p}}^{{\small j}}}+(j-x_{p}^{{\small R%
}_{{\small p}}^{{\small j}}})e^{p}$.

We have 
\begin{equation*}
R^{\prime }\in \mathcal{A}\left( R,prec\left( p,sR\right) \right) \text{ and 
}f(x^{{\small R}_{{\small p}}^{{\small j}}}+(j-x_{p}^{{\small R}_{{\small p}%
}^{{\small j}}})e^{p})<z_{i}\text{.}
\end{equation*}

Hence 
\begin{equation*}
\left\{ 
\begin{array}{l}
\forall R^{\prime }\in \mathcal{A}\left( R,p\right) ,\text{ }f\left(
x^{R^{\prime }}\right) \geq z_{i} \\ 
\exists R^{\prime }\in \mathcal{A}\left( R,prec\left( p,sR\right) \right)
:f\left( x^{R^{\prime }}\right) <z_{i}\text{.}%
\end{array}%
\right.
\end{equation*}%
$.$

\begin{itemize}
\item If $\left\{ 
\begin{array}{l}
f\left( x^{{\small R}_{{\small p}}^{{\small 1}}}\right) <z_{i} \\ 
f(x^{{\small R}_{{\small p}}^{{\small 1}}}+(1-x_{p}^{{\small R}_{{\small p}%
}^{{\small 1}}})e^{p})\geq z_{i}%
\end{array}%
\right. $, then since for all $R^{\prime }\in \mathcal{A}\left( R,p\right) $%
, we have , it follows by monotonicity that .
\end{itemize}

\begin{equation*}
x^{R^{\prime }}\leq x^{{\small R}_{{\small p}}^{{\small 1}}}\text{ and }%
f\left( x^{{\small R}_{{\small p}}^{{\small 1}}}\right) <z_{i}
\end{equation*}

it follows by monotonicity that 
\begin{equation*}
f\left( x^{R^{\prime }}\right) <z_{i},\forall R^{\prime }\in \mathcal{A}%
\left( R,p\right) .
\end{equation*}

Let us find $R^{\prime }\in \mathcal{A}\left( R,\text{ }prec\left(
p,sR\right) \right) $ such that 
\begin{equation*}
f\left( x^{R^{\prime }}\right) \geq z_{i}.
\end{equation*}

It suffices to consider $R^{\prime }=(sR^{\prime },x^{R^{\prime }})$ with $%
sR^{\prime }=sR$ and $x^{R^{\prime }}=x^{{\small R}_{{\small p}}^{{\small 1}%
}}+(1-x_{p}^{{\small R}_{{\small p}}^{{\small 1}}})e^{p}$. This ends the
proof for \textbf{Case 1}.

\textbf{Case 2}: $sR\left( p\right) =1.$

Let us show that $\forall R^{\prime }\in \mathcal{A}\left( R,p\right) ,$ $%
f\left( x^{R^{\prime }}\right) \geq z_{i}$ or $\forall R^{\prime }\in 
\mathcal{A}\left( R,p\right) ,$ $f\left( x^{R^{\prime }}\right) <z_{i}$.

\begin{itemize}
\item If $\left\{ 
\begin{array}{l}
f(x^{{\small R}_{{\small p}}^{{\small j}}})\geq z_{i} \\ 
f\left( x^{{\small R}_{{\small p}}^{{\small j}}}+(j-x_{p}^{{\small R}_{%
{\small p}}^{{\small j}}})e^{p}\right) <z_{i}%
\end{array}%
\right. ,$ since for all $R^{\prime }\in \mathcal{A}\left( R,p\right) $, we
have
\end{itemize}

\begin{equation*}
x^{{\small R}_{{\small p}}^{{\small j}}}\leq x^{R^{\prime }}\text{ and }f(x^{%
{\small R}_{{\small p}}^{{\small j}}})\geq z_{i}\text{.}
\end{equation*}

It follows by monotonicity that: 
\begin{equation*}
f\left( x^{R^{\prime }}\right) \geq z_{i},\text{ }\forall R^{\prime }\in 
\mathcal{A}\left( R,p\right) .
\end{equation*}

\begin{itemize}
\item If $\left\{ 
\begin{array}{l}
f\left( x^{{\small R}_{{\small p}}^{{\small 1}}}\right) <z_{i} \\ 
f\left( x^{{\small R}_{{\small p}}^{{\small 1}}}+(1-x_{p}^{{\small R}_{%
{\small p}}^{{\small 1}}})e^{p}\right) \geq z_{i}%
\end{array}%
\right. $, since for all $R^{\prime }\in \mathcal{A}\left( R,p\right) $, we
have
\end{itemize}

\begin{equation*}
x^{R^{\prime }}\leq x^{{\small R}_{{\small p}}^{{\small 1}}}\text{ and }%
f\left( x^{{\small R}_{{\small p}}^{{\small 1}}}\right) <z_{i}\text{.}
\end{equation*}

It follows by monotonicity that 
\begin{equation*}
f\left( x^{R^{\prime }}\right) <z_{i},\text{ }\forall R^{\prime }\in 
\mathcal{A}\left( R,p\right) \text{.}
\end{equation*}%
We deduce that for all $R^{\prime }\in \mathcal{A}\left( R,p\right) $, 
\begin{equation*}
f\left( x^{R^{\prime }}\right) \geq z_{i}\text{ or }f\left( x^{R^{\prime
}}\right) <z_{i}\text{.}
\end{equation*}

We therefore conclude that $R\in $ $\mathcal{R}_{ip}^{+}\left( f\right) $.
\end{proof}

\bigskip

The following notation is useful.

\begin{notation}
Let $R=\left( sR,x^{{\small R}}\right) $ be an ordered allocation of
positions. We denote respectively by $R_{pq}$ and $R_{pq}^{0}$ the following
ordered allocation of positionss:

1) $\left\{ 
\begin{array}{l}
sR_{pq}\left( p\right) =sR\left( q\right) ;\text{ }sR_{pq}\left( q\right)
=sR\left( p\right) \text{ and }sR_{pq}\left( a\right) =sR\left( a\right) 
\text{ for all }a\notin \left\{ p,q\right\}  \\ 
\text{and} \\ 
x^{{\small R}_{{\small pq}}}=x^{{\small R}}+(x_{q}^{{\small R}}-x_{p}^{%
{\small R}})e^{p}+(x_{p}^{{\small R}}-x_{q}^{{\small R}})e^{q}%
\end{array}%
\right. $

2) $\left\{ 
\begin{array}{l}
sR_{pq}^{0}\left( p\right) =sR\left( q\right) ;sR_{pq}^{0}\left( q\right)
=sR\left( p\right) \text{ and }sR_{pq}^{0}\left( a\right) =sR\left( a\right) 
\text{ for all }a\notin \left\{ p,q\right\} \\ 
\text{and} \\ 
x^{{\small R}_{{\small pq}}^{{\small 0}}}=x^{{\small R}}%
\end{array}%
\right. $
\end{notation}

\bigskip

We have the following remark.

\begin{remark}
Let $R=\left( sR,x^{{\small R}}\right) $ be an ordered allocation of
positions, $p,q\in N:x_{{\small p}}^{{\small R}}=r$ and $s=x_{{\small q}}^{%
{\small R}}$.

$1)$ If $sR\left( p\right) <sR\left( q\right) $, then,%
\begin{eqnarray*}
x^{{\small R}_{{\small pq}}^{{}}} &=&x^{{\small R}}+(x_{q}^{{\small R}%
}-x_{p}^{{\small R}})e^{p}+(x_{p}^{{\small R}}-x_{q}^{{\small R}})e^{q} \\
x^{{\small R}_{{\small q}}^{{\small j}}} &=&x^{\left( {\small R}_{{\small pq}%
}^{{\small 0}}\right) _{{\small p}}^{{\small j}}}\text{ } \\
x^{{\small R}_{{\small q}}^{{\small 1}}} &=&x^{\left( {\small R}_{{\small pq}%
}^{{\small 0}}\right) _{{\small p}}^{{\small 1}}}\text{.}
\end{eqnarray*}%
$2)$ If $sR\left( p\right) <sR\left( q\right) $, then,%
\begin{eqnarray*}
x^{{\small R}_{{\small q}}^{{\small j}}}+(x_{q}^{{\small R}_{{\small q}}^{%
{\small j}}}-x_{p}^{{\small R}_{{\small q}}^{{\small j}}})e^{p}+(x_{p}^{%
{\small R}_{{\small q}}^{{\small j}}}-x_{q}^{{\small R}_{{\small q}}^{%
{\small j}}})e^{q} &=&x^{\left( {\small R}_{{\small pq}}\right) _{{\small p}%
}^{{\small j}}} \\
x^{{\small R}_{{\small q}}^{{\small j}}}+(r-x_{q}^{{\small R}_{{\small q}}^{%
{\small j}}})e^{q}+(j-x_{p}^{{\small R}_{{\small q}}^{{\small j}}})e^{p}
&=&x^{\left( {\small R}_{{\small pq}}\right) _{{\small p}}^{{\small j}%
}}+(j-x_{p}^{\left( {\small R}_{{\small pq}}\right) _{{\small p}}^{{\small j}%
}})e^{p} \\
x^{{\small R}_{{\small q}}^{{\small 1}}}+(x_{q}^{{\small R}_{{\small q}}^{%
{\small 1}}}-x_{p}^{{\small R}_{{\small q}}^{{\small 1}}})e^{p}+(x_{p}^{%
{\small R}_{{\small q}}^{{\small 1}}}-x_{q}^{{\small R}_{{\small q}}^{%
{\small 1}}})e^{q} &=&x^{\left( {\small R}_{{\small pq}}\right) _{{\small p}%
}^{{\small 1}}} \\
x^{{\small R}_{{\small q}}^{{\small 1}}}+(r-x_{q}^{{\small R}_{{\small q}}^{%
{\small 1}}})e^{q}+(1-x_{p}^{{\small R}_{{\small q}}^{{\small 1}}})e^{p}
&=&x^{\left( {\small R}_{{\small pq}}\right) _{{\small p}}^{{\small 1}%
}}+(1-x_{p}^{\left( {\small R}_{{\small pq}}\right) _{{\small p}}^{{\small 1}%
}})e^{p}\text{.}
\end{eqnarray*}

$3)$ If $sR\left( p\right) >sR\left( q\right) $, then%
\begin{eqnarray*}
x^{\left( {\small R}_{{\small pq}}\right) _{{\small p}}^{{\small j}%
}}+(j-x_{p}^{\left( {\small R}_{{\small pq}}\right) _{{\small p}}^{{\small j}%
}})e^{p} &=&x^{{\small R}_{{\small q}}^{{\small j}}}+(j-x_{q}^{{\small R}_{%
{\small q}}^{{\small j}}})e^{q} \\
x^{{\small R}_{{\small q}}^{{\small j}}}+(j-x_{q}^{{\small R}_{{\small q}}^{%
{\small j}}})e^{q}+(s-x_{p}^{{\small R}_{{\small q}}^{{\small j}}})e^{p}
&=&x^{\left( {\small R}_{{\small pq}}\right) _{{\small p}}^{{\small j}}} \\
x^{\left( {\small R}_{{\small pq}}\right) _{{\small p}}^{{\small 1}%
}}+(1-x_{p}^{\left( {\small R}_{{\small pq}}\right) _{{\small p}}^{{\small 1}%
}})e^{p} &=&x^{{\small R}_{{\small q}}^{{\small 1}}}+(1-x_{q}^{{\small R}_{%
{\small q}}^{{\small 1}}})e^{q} \\
x^{{\small R}_{{\small q}}^{{\small 1}}}+(1-x_{q}^{{\small R}_{{\small q}}^{%
{\small 1}}})e^{q}+(s-x_{p}^{{\small R}_{{\small q}}^{{\small 1}}})e^{p}
&=&x^{\left( {\small R}_{{\small pq}}\right) _{{\small p}}^{{\small 1}}}%
\text{.}
\end{eqnarray*}
\end{remark}

\bigskip

The following lemma is straightforward.

\begin{lemma}
Let $\left( N,T\text{ }f\right) $ be a game ladder, and $p,q\in N$ two
players. If $R\ $and $R^{\prime }$ are two ordered allocation of positionss
such that $R\neq R^{\prime }$, then $R_{pq}^{0}\neq R_{pq}^{\prime 0}$ and $%
R_{pq}\neq R_{pq}^{\prime }$.
\end{lemma}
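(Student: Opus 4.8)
The plan is to show that each of the two maps $R\mapsto R_{pq}^{0}$ and $R\mapsto R_{pq}$ is injective on the (finite) set of ordered allocations of positions; the two stated non-equalities are then exactly the contrapositives of injectivity. I would carry this out directly from Notation 2, since everything reduces to bookkeeping on the two components $sR$ and $x^{R}$ and no earlier result is needed.

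First, the $R_{pq}^{0}$ case. Suppose $R_{pq}^{0}=R_{pq}^{\prime 0}$. By Notation 2(2) the second component of $R_{pq}^{0}$ is $x^{R}$ and that of $R_{pq}^{\prime 0}$ is $x^{R^{\prime }}$, so equality of the second components gives $x^{R}=x^{R^{\prime }}$ at once. For the first components, $sR_{pq}^{0}=sR_{pq}^{\prime 0}$; evaluating this identity at $p$ gives $sR(q)=sR^{\prime }(q)$, at $q$ gives $sR(p)=sR^{\prime }(p)$, and at any $a\notin\{p,q\}$ gives $sR(a)=sR^{\prime }(a)$. Hence $sR=sR^{\prime }$, and therefore $R=R^{\prime }$.

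Second, the $R_{pq}$ case, which is the same computation with one extra line. Assume $R_{pq}=R_{pq}^{\prime }$. The first-component argument is verbatim the one above and yields $sR=sR^{\prime }$. For the second components, Notation 2(1) gives $x_{p}^{R_{pq}}=x_{q}^{R}$, $x_{q}^{R_{pq}}=x_{p}^{R}$, and $x_{a}^{R_{pq}}=x_{a}^{R}$ for $a\notin\{p,q\}$, and likewise for $R^{\prime }$. Equating $x^{R_{pq}}=x^{R_{pq}^{\prime }}$ coordinate by coordinate then gives $x_{q}^{R}=x_{q}^{R^{\prime }}$, $x_{p}^{R}=x_{p}^{R^{\prime }}$, and $x_{a}^{R}=x_{a}^{R^{\prime }}$ for the remaining players, so $x^{R}=x^{R^{\prime }}$ and again $R=R^{\prime }$.

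Alternatively, and perhaps more cleanly, one can note that both maps are involutions: a direct check from Notation 2 shows $(R_{pq}^{0})_{pq}^{0}=R$ and $(R_{pq})_{pq}=R$, where in the latter the $e^{p}$ and $e^{q}$ corrections telescope to zero; each map is thus its own inverse, in particular a bijection, hence injective. There is no genuine obstacle here. The one point to watch is keeping the roles of $p$ and $q$ straight in the permutation component, since $sR_{pq}$ sends $p$ to $sR(q)$ and $q$ to $sR(p)$, so that entry-order indices and position indices are permuted, and one must not conflate the two swaps when checking the involution property.
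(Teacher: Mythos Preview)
Your proof is correct. The paper does not actually give a proof of this lemma at all; it simply declares it ``straightforward'' and moves on, so your argument supplies exactly the routine bookkeeping the authors omit, and your involution remark is a clean way to see injectivity at a glance.
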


\bigskip

Lemmas 4 and 5 below will also be useful.

\begin{lemma}
Let $\left( N,T,f\right) $ be a game ladder, $p,q\in N$ two players such
that $p\succeq q$, and $R=\left( sR,x^{R}\right) $ an ordered allocation of
positions such that $R\in $ $\mathcal{R}_{iq}^{+}\left( f\right) $, $f(x^{%
{\small R}_{{\small q}}^{{\small j}}})\geq z_{i}$ and \newline
$f(x^{{\small R}_{{\small q}}^{{\small j}}}+(j-x_{q}^{{\small R}_{{\small q}%
}^{{\small j}}})e^{q})<z_{i}$.

$1)$ If $sR\left( p\right) <sR\left( q\right) $ and $x_{p}^{R}>x_{q}^{R}$,
then $R_{pq}\in $ $\mathcal{R}_{ip}^{+}\left( f\right) .$

$2)$ If $sR\left( p\right) >sR\left( q\right) $, then $R_{pq}\in $ $\mathcal{%
R}_{ip}^{+}\left( f\right) $.
\end{lemma}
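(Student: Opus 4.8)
The plan is to use the characterization of $\mathcal{R}_{ip}^{+}(f)$ established above as a dictionary. The hypotheses $R\in\mathcal{R}_{iq}^{+}(f)$, $f(x^{R_{q}^{j}})\geq z_{i}$ and $f(x^{R_{q}^{j}}+(j-x_{q}^{R_{q}^{j}})e^{q})<z_{i}$ are precisely the first of the two alternatives of that characterization, applied to the pair $(q,R)$. By the converse direction of the characterization, applied this time to $(p,R_{pq})$, it therefore suffices to verify the first alternative for $p$ in $R_{pq}$: that $f(x^{(R_{pq})_{p}^{j}})\geq z_{i}$ and $f(x^{(R_{pq})_{p}^{j}}+(j-x_{p}^{(R_{pq})_{p}^{j}})e^{p})<z_{i}$. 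I would get these two inequalities by transporting the two known ones from $q$ to $p$; the engine of the transport is the list of identities recorded in the remark above, which rewrites the profiles built from $(R_{pq})_{p}^{j}$ in terms of those built from $R_{q}^{j}$, after which the relation $p\succeq q$ supplies the required comparisons. In each sub-case the two profiles to be compared differ only in their $p$- and $q$-coordinates, so the definition of $\succeq_{(r,s)}$ applies directly, with the common profile taken to place $p$ and $q$ at the lower of the two rungs involved and to agree with $x^{R_{q}^{j}}$ elsewhere.

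I would split along the two items of the statement, using the two cases of that remark. In item~2, where $sR(p)>sR(q)$, player $p$ enters after $q$, so in $R_{q}^{j}$ it sits at the top rung $j$ while $q$ keeps $x_{q}^{R}$; the identity $x^{(R_{pq})_{p}^{j}}+(j-x_{p}^{(R_{pq})_{p}^{j}})e^{p}=x^{R_{q}^{j}}+(j-x_{q}^{R_{q}^{j}})e^{q}$ hands us the strict inequality at once from the hypothesis, and the companion identity presents $x^{(R_{pq})_{p}^{j}}$ as $x^{R_{q}^{j}}$ with $p$ and $q$ interchanged between the rungs $x_{q}^{R}$ and $j$; applying $p\succeq q$ at that pair of rungs --- legitimate because $x_{q}^{R}<j$, which the hypothesis $f(x^{R_{q}^{j}}+(j-x_{q}^{R_{q}^{j}})e^{q})<z_{i}$ forces --- turns $f(x^{R_{q}^{j}})\geq z_{i}$ into $f(x^{(R_{pq})_{p}^{j}})\geq z_{i}$. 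In item~1, where $sR(p)<sR(q)$ and $x_{p}^{R}>x_{q}^{R}$, set $r:=x_{p}^{R}$ and $s:=x_{q}^{R}$; then $p$ keeps the rung $r$ and $q$ the rung $s$ in $R_{q}^{j}$, and the remark again presents the two target profiles for $p$ in $R_{pq}$ as interchanges of $p$ and $q$ --- between rungs $r$ and $s$, and between rungs $j$ and $r$ --- in, respectively, $x^{R_{q}^{j}}$ and the profile already known to have $f$-value below $z_{i}$, so that $p\succeq_{(r,s)}q$ and $p\succeq_{(j,r)}q$ carry the two inequalities over. The hypothesis $x_{p}^{R}>x_{q}^{R}$ is precisely what makes $r>s$, so that $p\succeq_{(r,s)}q$ is both available and oriented the way it is needed. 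In either item, with both inequalities in hand the characterization yields $R_{pq}\in\mathcal{R}_{ip}^{+}(f)$.

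The step I expect to be the main obstacle is the transport, and within it the orientation: in each sub-case one must arrange the common profile so that the two sides of the defining inequality of $p\succeq_{(r,s)}q$ are exactly $x^{R_{q}^{j}}$ (or the profile known to fall below $z_{i}$) and the relevant target profile, in the order that turns the known bound into the desired one. This is bookkeeping, but it is easy to flip. It is worth stressing that the condition $x_{p}^{R}>x_{q}^{R}$ in item~1 is not incidental: it is exactly what keeps the exchange of $p$ and $q$ non-degenerate and correctly oriented, for without it the two rungs could coincide or be reversed and $\succeq$ would no longer deliver the comparison; the assumption that $q$ is $i$-pivotal via the first alternative of the characterization plays the analogous role of ruling out the corner configuration $x_{q}^{R}=j$ in item~2.
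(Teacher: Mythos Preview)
Your overall plan coincides with the paper's: reduce to the first alternative of the characterization lemma for the pair $(p,R_{pq})$, use the identities of the remark to rewrite $x^{(R_{pq})_p^j}$ and $x^{(R_{pq})_p^j}+(j-x_p^{(R_{pq})_p^j})e^p$ in terms of $x^{R_q^j}$, and then push the two known inequalities across via $p\succeq q$. For item~2 your description is essentially the paper's argument.

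The gap is in item~1, and it is exactly the orientation issue you flag at the end but then dismiss as bookkeeping. Write $r=x_p^R>s=x_q^R$. The second target inequality is $f(x^{(R_{pq})_p^j}+(j-x_p^{(R_{pq})_p^j})e^p)<z_i$, and the identity in the remark gives this profile as the one with $p$ at $j$ and $q$ at $r$. The only hypothesis available with a strict upper bound is $f(x^{R_q^j}+(j-x_q^{R_q^j})e^q)<z_i$, the profile with $p$ at $r$ and $q$ at $j$. Applying $p\succeq_{(j,r)}q$ to the common profile with both at $r$ yields
\[
f(p\text{ at }j,\,q\text{ at }r)\ \geq\ f(p\text{ at }r,\,q\text{ at }j),
\]
which is a \emph{lower} bound on the target, not the needed upper bound. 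A single swap cannot deliver the strict inequality. The paper does not attempt a direct swap here; instead it inserts the intermediate profile $x^{R_q^j}+(r-x_q^{R_q^j})e^q$ (both $p$ and $q$ at level $r$) and splits on whether its $f$-value is below $z_i$ or not: one branch is handled by monotonicity alone, the other combines the hypothesis $f(x^{R_q^j}+(j-x_q^{R_q^j})e^q)<z_i$ with $p\succeq q$ applied at that intermediate profile, so that the comparison runs in the usable direction. Your sketch is missing this case distinction and the monotonicity step; without them the second inequality in item~1 is not established.
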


\begin{proof}
$1)$ Suppose that $sR\left( p\right) <sR\left( q\right) $ and $%
x_{p}^{R}=r>s=x_{q}^{R}$. Let us show that $R_{pq}\in $ $\mathcal{R}%
_{ip}^{+}\left( f\right) $, that is:

\begin{equation*}
\left\{ 
\begin{array}{c}
f(x^{\left( {\small R}_{{\small pq}}\right) _{{\small p}}^{{\small j}}})\geq
z_{i} \\ 
f\left( x^{\left( {\small R}_{{\small pq}}\right) _{{\small p}}^{{\small j}%
}}+(j-x_{p}^{\left( {\small R}_{{\small pq}}\right) _{{\small p}}^{{\small j}%
}})e^{p}\right) <z_{i}%
\end{array}%
\right. \text{ or }\left\{ 
\begin{array}{c}
f(x^{\left( {\small R}_{{\small pq}}\right) _{{\small p}}^{{\small 1}%
}})<z_{i} \\ 
f(x^{\left( {\small R}_{{\small pq}}\right) _{{\small p}}^{{\small 1}%
}}+(1-x_{p}^{\left( {\small R}_{{\small pq}}\right) _{{\small p}}^{{\small 1}%
}})e^{p})\geq z_{i}%
\end{array}%
\right.
\end{equation*}%
Since \newline
\begin{equation*}
p\succeq q\text{ and }x_{p}^{{\small R}_{{\small q}}^{{\small j}}}>x_{q}^{%
{\small R}_{{\small q}}^{{\small j}}}
\end{equation*}

it follows that: \newline
\begin{equation*}
f\left( x^{{\small R}_{{\small q}}^{{\small j}}}+(x_{q}^{{\small R}_{{\small %
q}}^{{\small j}}}-x_{p}^{{\small R}_{{\small q}}^{{\small j}}})e^{p}+(x_{p}^{%
{\small R}_{{\small q}}^{{\small j}}}-x_{q}^{{\small R}_{{\small q}}^{%
{\small j}}})e^{q}\right) \geq f(x^{{\small R}_{{\small q}}^{{\small j}%
}})\geq z_{i}\text{.}
\end{equation*}%
But 
\begin{equation*}
x^{{\small R}_{{\small q}}^{{\small j}}}+(x_{q}^{{\small R}_{{\small q}}^{%
{\small j}}}-x_{p}^{{\small R}_{{\small q}}^{{\small j}}})e^{p}+(x_{p}^{%
{\small R}_{{\small q}}^{{\small j}}}-x_{q}^{{\small R}_{{\small q}}^{%
{\small j}}})e^{q}=x^{\left( {\small R}_{{\small pq}}\right) _{{\small p}}^{%
{\small j}}}\text{.}
\end{equation*}

Hence 
\begin{equation*}
f(x^{\left( {\small R}_{{\small pq}}\right) _{{\small p}}^{{\small j}}})\geq
z_{i}\text{.}
\end{equation*}

Let us show that $f\left( x^{\left( {\small R}_{{\small pq}}\right) _{%
{\small p}}^{{\small j}}}+(j-x_{p}^{\left( {\small R}_{{\small pq}}\right) _{%
{\small p}}^{{\small j}}})e^{p}\right) <z_{i}$.

We have: 
\begin{equation*}
f(x^{{\small R}_{{\small q}}^{{\small j}}}+(r-x_{q}^{{\small R}_{{\small q}%
}^{{\small j}}})e^{q})<z_{i}\text{ or }f(x^{{\small R}_{{\small q}}^{{\small %
j}}}+(r-x_{q}^{{\small R}_{{\small q}}^{{\small j}}})e^{q})\geq z_{i}\text{.}
\end{equation*}

\begin{itemize}
\item If $f(x^{{\small R}_{{\small q}}^{{\small j}}}+(r-x_{q}^{{\small R}_{%
{\small q}}^{{\small j}}})e^{q})<z_{i}$, since 
\begin{equation*}
x^{{\small R}_{{\small q}}^{{\small j}}}+(j-x_{p}^{{\small R}_{{\small q}}^{%
{\small j}}})e^{p}+(r-x_{q}^{{\small R}_{{\small q}}^{{\small j}}})e^{q}\leq
x^{{\small R}_{{\small q}}^{{\small j}}}+(r-x_{q}^{{\small R}_{{\small q}}^{%
{\small j}}})e^{q}
\end{equation*}%
and \newline
\begin{equation*}
x^{{\small R}_{{\small q}}^{{\small j}}}+(j-x_{p}^{{\small R}_{{\small q}}^{%
{\small j}}})e^{p}+(r-x_{q}^{{\small R}_{{\small q}}^{{\small j}%
}})e^{q}=x^{\left( {\small R}_{{\small pq}}\right) _{{\small p}}^{{\small j}%
}}+(j-x_{p}^{\left( {\small R}_{{\small pq}}\right) _{{\small p}}^{{\small j}%
}})e^{p}\text{.}
\end{equation*}%
it follows that:%
\begin{equation*}
f(x^{\left( {\small R}_{{\small pq}}\right) _{{\small p}}^{{\small j}%
}}+(j-x_{p}^{\left( {\small R}_{{\small pq}}\right) _{{\small p}}^{{\small j}%
}})e^{p})=f(x^{{\small R}_{{\small q}}^{{\small j}}}+(j-x_{p}^{{\small R}_{%
{\small q}}^{{\small j}}})e^{p}+(r-x_{q}^{{\small R}_{{\small q}}^{{\small j}%
}})e^{q})\leq f(x^{{\small R}_{{\small q}}^{{\small j}}}+(r-x_{q}^{{\small R}%
_{{\small q}}^{{\small j}}})e^{q})<z_{i}\text{.}
\end{equation*}%
Therefore:%
\begin{equation*}
f(x^{\left( {\small R}_{{\small pq}}\right) _{{\small p}}^{{\small j}%
}}+(j-x_{p}^{\left( {\small R}_{{\small pq}}\right) _{{\small p}}^{{\small j}%
}})e^{p})<z_{i}\text{.}
\end{equation*}

\item If $f(x^{{\small R}_{{\small q}}^{{\small j}}}+(r-x_{q}^{{\small R}_{%
{\small q}}^{{\small j}}})e^{q})\geq z_{i}$, since 
\begin{equation*}
x^{{\small R}_{{\small q}}^{{\small j}}}+(j-x_{q}^{{\small R}_{{\small q}}^{%
{\small j}}})e^{q}=x^{{\small R}_{{\small q}}^{{\small j}}}+(r-x_{q}^{%
{\small R}_{{\small q}}^{{\small j}}})e^{q}+(j-r)e^{q}
\end{equation*}%
it follows that 
\begin{equation*}
f(x^{{\small R}_{{\small q}}^{{\small j}}}+(r-x_{q}^{{\small R}_{{\small q}%
}^{{\small j}}})e^{q}+(j-r)e^{q})<z_{i}
\end{equation*}
\end{itemize}

because 
\begin{equation*}
f(x^{{\small R}_{{\small q}}^{{\small j}}}+(j-x_{q}^{{\small R}_{{\small q}%
}^{{\small j}}})e^{q})<z_{i}\text{.}
\end{equation*}

We therefore have: 
\begin{equation*}
f(x^{{\small R}_{{\small q}}^{{\small j}}}+(r-x_{q}^{{\small R}_{{\small q}%
}^{{\small j}}})e^{q})>f(x^{{\small R}_{{\small q}}^{{\small j}}}+(r-x_{q}^{%
{\small R}_{{\small q}}^{{\small j}}})e^{q}+(j-r)e^{q})\text{.}
\end{equation*}%
\ Since $p\succeq q$, we have 
\begin{equation*}
f(x^{{\small R}_{{\small q}}^{{\small j}}}+(r-x_{q}^{{\small R}_{{\small q}%
}^{{\small j}}})e^{q}+(j-x_{p}^{{\small R}_{{\small q}}^{{\small j}%
}})e^{q})<z_{i}\text{.}
\end{equation*}

But 
\begin{equation*}
x^{{\small R}_{{\small q}}^{{\small j}}}+(r-x_{q}^{{\small R}_{{\small q}}^{%
{\small j}}})e^{q}+(j-x_{p}^{{\small R}_{{\small q}}^{{\small j}%
}})e^{q}=x^{\left( {\small R}_{{\small pq}}\right) _{{\small p}}^{{\small j}%
}}+(j-x_{p}^{\left( {\small R}_{{\small pq}}\right) _{{\small p}}^{{\small j}%
}})e^{p}\text{.}
\end{equation*}%
Hence 
\begin{equation*}
f(x^{\left( {\small R}_{{\small pq}}\right) _{{\small p}}^{{\small j}%
}}+(j-x_{p}^{\left( {\small R}_{{\small pq}}\right) _{{\small p}}^{{\small j}%
}})e^{p})<z_{i}\text{.}
\end{equation*}%
\newline
Therefore 
\begin{equation*}
f(x^{\left( {\small R}_{{\small pq}}\right) _{{\small p}}^{{\small j}}})\geq
z_{i}\text{ and }f(x^{\left( {\small R}_{{\small pq}}\right) _{{\small p}}^{%
{\small j}}}+(j-x_{p}^{\left( {\small R}_{{\small pq}}\right) _{{\small p}}^{%
{\small j}}})e^{p})<z_{i}\text{.}
\end{equation*}

We conclude that 
\begin{equation*}
R_{pq}\in \mathcal{R}_{ip}^{+}\left( f\right) \text{.}
\end{equation*}

$2)$ Suppose that $sR\left( p\right) >sR\left( q\right) $ and let us show
that $R_{pq}\in $ $\mathcal{R}_{ip}^{+}\left( f\right) $. Il suffices to
show that:%
\begin{equation*}
\left\{ 
\begin{array}{c}
f(x^{\left( {\small R}_{{\small pq}}\right) _{{\small p}}^{{\small j}}})\geq
z_{i} \\ 
f(x^{\left( {\small R}_{{\small pq}}\right) _{{\small p}}^{{\small j}%
}}+(j-x_{p}^{\left( {\small R}_{{\small pq}}\right) _{{\small p}}^{{\small j}%
}})e^{p})<z_{i}%
\end{array}%
\right.
\end{equation*}%
$.$

By assumption, we have: 
\begin{equation*}
R\in \mathcal{R}_{iq}^{+}\left( f\right) ,f(x^{{\small R}_{{\small q}}^{%
{\small j}}})\geq z_{i}\text{ and }f(x^{{\small R}_{{\small q}}^{{\small j}%
}}+(j-x_{q}^{{\small R}_{{\small q}}^{{\small j}}})e^{q})<z_{i}\text{.}
\end{equation*}

But 
\begin{equation*}
x^{{\small R}_{{\small q}}^{{\small j}}}+(j-x_{q}^{{\small R}_{{\small q}}^{%
{\small j}}})e^{q}\leq x^{{\small R}_{{\small q}}^{{\small j}}}+(j-x_{q}^{%
{\small R}_{{\small q}}^{{\small j}}})e^{q}+(s-j)e^{q}=x^{{\small R}_{%
{\small q}}^{{\small j}}}
\end{equation*}%
because 
\begin{equation*}
x_{q}^{R}=s\text{.}
\end{equation*}

Hence $\ $%
\begin{equation*}
f(x^{{\small R}_{{\small q}}^{{\small j}}}+(j-x_{q}^{{\small R}_{{\small q}%
}^{{\small j}}})e^{q}+(s-j)e^{q})=f(x^{{\small R}_{{\small q}}^{{\small j}%
}})\geq z_{i}>f(x^{{\small R}_{{\small q}}^{{\small j}}}+(j-x_{q}^{{\small R}%
_{{\small q}}^{{\small j}}})e^{q})\text{.}
\end{equation*}%
Since $p\succeq q$, we have: $\ $%
\begin{equation*}
f(x^{{\small R}_{{\small q}}^{{\small j}}}+(j-x_{q}^{{\small R}_{{\small q}%
}^{{\small j}}})e^{q}+(s-x_{p}^{{\small R}_{{\small q}}^{{\small j}%
}})e^{q})\geq f(x^{{\small R}_{{\small q}}^{{\small j}}}+(j-x_{q}^{{\small R}%
_{{\small q}}^{{\small j}}})e^{q}+(s-j)e^{q})\geq z_{i}\text{.}
\end{equation*}

But $\ $%
\begin{equation*}
x^{{\small R}_{{\small q}}^{{\small j}}}+(j-x_{q}^{{\small R}_{{\small q}}^{%
{\small j}}})e^{q}+(s-x_{p}^{{\small R}_{{\small q}}^{{\small j}%
}})e^{q}=x^{\left( {\small R}_{{\small pq}}\right) _{{\small p}}^{{\small j}%
}}\text{.}
\end{equation*}

Hence%
\begin{equation*}
f(x^{\left( {\small R}_{{\small pq}}\right) _{{\small p}}^{{\small j}}})\geq
z_{i}\text{.}
\end{equation*}

We have: 
\begin{equation*}
x^{\left( {\small R}_{{\small pq}}\right) _{{\small p}}^{{\small j}%
}}+(j-x_{p}^{\left( {\small R}_{{\small pq}}\right) _{{\small p}}^{{\small j}%
}})e^{p}=x^{{\small R}_{{\small q}}^{{\small j}}}+(j-x_{q}^{{\small R}_{%
{\small q}}^{{\small j}}})e^{q}\text{.}
\end{equation*}%
Hence 
\begin{equation*}
f(x^{\left( {\small R}_{{\small pq}}\right) _{{\small p}}^{{\small j}%
}}+(j-x_{p}^{\left( {\small R}_{{\small pq}}\right) _{{\small p}}^{{\small j}%
}})e^{p})<z_{i}
\end{equation*}%
as 
\begin{equation*}
f(x^{{\small R}_{{\small q}}^{{\small j}}}+(j-x_{q}^{{\small R}_{{\small q}%
}^{{\small j}}})e^{q})<z_{i}\text{.}
\end{equation*}

We therefore have \newline
\begin{equation*}
f(x^{\left( {\small R}_{{\small pq}}\right) _{{\small p}}^{{\small j}}})\geq
z_{i}\text{ and }f(x^{\left( {\small R}_{{\small pq}}\right) _{{\small p}}^{%
{\small j}}}+(j-x_{p}^{\left( {\small R}_{{\small pq}}\right) _{{\small p}}^{%
{\small j}}})e^{p})<z_{i}
\end{equation*}%
which implies: \newline
\begin{equation*}
R_{pq}\in \mathcal{R}_{ip}^{+}\left( f\right) \text{.}
\end{equation*}
\end{proof}

\bigskip

We also have the following lemma.

\begin{lemma}
Let $\left( N,T,f\right) $ be a game ladder$,$ $p,$ $q\in N$ two players
such that $p\succeq q$, and $R=\left( sR,\text{ }x^{R}\right) $ an ordered
allocation of positions such that $R\in $ $\mathcal{R}_{iq}^{+}(f)$, $%
f\left( x^{{\small R}_{{\small q}}^{{\small 1}}}\right) <z_{i}$ and \newline
$f\left( x^{{\small R}_{{\small q}}^{{\small 1}}}+(1-x_{q}^{{\small R}_{%
{\small q}}^{{\small 1}}})e^{q}\right) \geq z_{i}$.

$1)$ If $sR\left( p\right) <sR\left( q\right) $ and $x_{p}^{R}<x_{q}^{R}$,
then $R_{pq}\in $ $\mathcal{R}_{ip}^{+}\left( f\right) $.

$2)$ If $sR\left( p\right) >sR\left( q\right) $, then $R_{pq}\in $ $\mathcal{%
R}_{ip}^{+}\left( f\right) $.
\end{lemma}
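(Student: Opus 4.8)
The strategy mirrors Lemma 4 exactly, but now we work with the "lowest-position" completion $R_q^1$ and the second alternative in the characterization of Lemma 1 (the $f(x^{R_p^1}) < z_i$ branch). The plan is to verify in each of the two cases that $R_{pq}$ satisfies the second system in Lemma 1, namely $f(x^{(R_{pq})_p^1}) < z_i$ and $f(x^{(R_{pq})_p^1} + (1 - x_p^{(R_{pq})_p^1})e^p) \geq z_i$. Throughout I would invoke the identities recorded in Remark 2 to translate profiles built from $R_q^1$ into profiles built from $(R_{pq})_p^1$, so that the inequalities coming from $p \succeq q$ and from $R \in \mathcal{R}_{iq}^+(f)$ can be read off directly.

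For part $1)$, assume $sR(p) < sR(q)$ and $x_p^R = r < s = x_q^R$. First I would show $f(x^{(R_{pq})_p^1}) < z_i$: since $x_p^{R_q^1} < x_q^{R_q^1}$ and $p \succeq q$, swapping the two players' positions in $x^{R_q^1}$ cannot increase output, so $f(x^{R_q^1} + (x_q^{R_q^1} - x_p^{R_q^1})e^p + (x_p^{R_q^1} - x_q^{R_q^1})e^q) \leq f(x^{R_q^1}) < z_i$, and by Remark 2 the left-hand profile equals $x^{(R_{pq})_p^1}$. Next I would show $f(x^{(R_{pq})_p^1} + (1 - x_p^{(R_{pq})_p^1})e^p) \geq z_i$. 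Here I would split on whether $f(x^{R_q^1} + (s - x_p^{R_q^1})e^q) \geq z_i$ or $< z_i$ — the analogue of the case split in Lemma 4 — using the monotonicity bound between $x^{R_q^1} + (1 - x_p^{R_q^1})e^q + (s - x_q^{R_q^1})e^q$-type profiles, the hypothesis $f(x^{R_q^1} + (1 - x_q^{R_q^1})e^q) \geq z_i$, and the relation $p \succeq q$ applied in position $r$ versus lower position $1$, finally converting via Remark 2 to conclude $f(x^{(R_{pq})_p^1} + (1 - x_p^{(R_{pq})_p^1})e^p) \geq z_i$. Then Lemma 1 gives $R_{pq} \in \mathcal{R}_{ip}^+(f)$.

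For part $2)$, assume $sR(p) > sR(q)$. This is the cleaner case, just as in Lemma 4: here there is no constraint relating $r$ and $s$, and from $R \in \mathcal{R}_{iq}^+(f)$ with $f(x^{R_q^1}) < z_i$ and $f(x^{R_q^1} + (1 - x_q^{R_q^1})e^q) \geq z_i$ I would use the monotone embedding $x^{R_q^1} + (1 - x_q^{R_q^1})e^q \leq x^{R_q^1}$ reversed appropriately (noting $x_q^R = s$), then apply $p \succeq q$ to transfer the threshold crossing from player $q$ to player $p$ at position $s$, and finally rewrite both resulting profiles using Remark 2 part $3)$ as $x^{(R_{pq})_p^1}$ and $x^{(R_{pq})_p^1} + (1 - x_p^{(R_{pq})_p^1})e^p$ respectively. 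That yields the second system of Lemma 1 and hence $R_{pq} \in \mathcal{R}_{ip}^+(f)$.

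The main obstacle I anticipate is bookkeeping rather than conceptual: one must be careful that the various unit-vector shifts in Remark 2 are applied with the right coordinates (which player's position is being lowered to $1$, which is being set to the other's old value), because the roles of $e^p$ and $e^q$ are not symmetric once $sR(p)$ and $sR(q)$ are ordered. Getting the case split in part $1)$ right — in particular checking that the "or" in the hypothesis translates into the correct "or" for $R_{pq}$, exactly as the monotonicity argument in Lemma 4 does — is where a sign error would most easily creep in, so I would track each profile inequality explicitly against Remark 2 before combining them.
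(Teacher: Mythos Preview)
Your plan is correct and follows essentially the same route as the paper: verify the second system of Lemma~1 for $R_{pq}$ by combining $p\succeq q$ with the Remark~2 identities, handling part~1) via a case split on an intermediate profile and part~2) directly. The only slip is in the exact profile you name for the case split in part~1) --- the paper splits on $f\bigl(x^{R_q^1}+(r-x_q^{R_q^1})e^q\bigr)$, i.e.\ lowering $q$'s coordinate to $r$ in $x^{R_q^1}$ --- but you already flag this bookkeeping as the delicate point, and once the correct profile is used the argument goes through exactly as you outline.
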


\begin{proof}
$1)$ Suppose that $sR\left( p\right) <sR\left( q\right) $ and $%
x_{p}^{R}=r>s=x_{q}^{R}$. Let us show that $R_{pq}\in $ $\mathcal{R}%
_{ip}^{+}\left( f\right) $, that is:

\begin{equation*}
\left\{ 
\begin{array}{c}
f(x^{\left( {\small R}_{{\small pq}}\right) _{{\small p}}^{{\small j}}})\geq
z_{i} \\ 
f(x^{\left( {\small R}_{{\small pq}}\right) _{{\small p}}^{{\small j}%
}}+(j-x_{p}^{\left( {\small R}_{{\small pq}}\right) _{{\small p}}^{{\small j}%
}})e^{p})<z_{i}%
\end{array}%
\right. \text{ or }\left\{ 
\begin{array}{c}
f(x^{\left( {\small R}_{{\small pq}}\right) _{{\small p}}^{{\small 1}%
}})<z_{i} \\ 
f(x^{\left( {\small R}_{{\small pq}}\right) _{{\small p}}^{{\small 1}%
}}+(1-x_{p}^{\left( {\small R}_{{\small pq}}\right) _{{\small p}}^{{\small 1}%
}})e^{p})\geq z_{i}%
\end{array}%
\right.
\end{equation*}%
Since $p\succeq q$, it follows that:

\begin{equation*}
f(x^{{\small R}_{{\small q}}^{{\small 1}}}+(x_{p}^{{\small R}_{{\small q}}^{%
{\small 1}}}-x_{q}^{{\small R}_{{\small q}}^{{\small 1}}})e^{q}+(x_{q}^{%
{\small R}_{{\small q}}^{{\small 1}}}-x_{p}^{{\small R}_{{\small q}}^{%
{\small j}}})e^{p})\leq f\left( x^{{\small R}_{{\small q}}^{{\small 1}%
}}\right) <z_{i}\text{.}
\end{equation*}

But:

\begin{equation*}
x^{{\small R}_{{\small q}}^{{\small 1}}}+(x_{p}^{{\small R}_{{\small q}}^{%
{\small 1}}}-x_{q}^{{\small R}_{{\small q}}^{{\small 1}}})e^{q}+(x_{q}^{%
{\small R}_{{\small q}}^{{\small 1}}}-x_{p}^{{\small R}_{{\small q}}^{%
{\small j}}})e^{p}=x^{\left( {\small R}_{{\small pq}}\right) _{{\small p}}^{%
{\small 1}}}\text{.}
\end{equation*}

So

\begin{equation*}
f(x^{\left( {\small R}_{{\small pq}}\right) _{{\small p}}^{{\small 1}%
}})<z_{i}\text{.}
\end{equation*}

Let us show that $f(x^{\left( {\small R}_{{\small pq}}\right) _{{\small p}}^{%
{\small 1}}}+(1-x_{p}^{\left( {\small R}_{{\small pq}}\right) _{{\small p}}^{%
{\small 1}}})e^{p})\geq z_{i}$. We have

\begin{equation*}
f(x^{{\small R}_{{\small q}}^{{\small 1}}}+(r-x_{q}^{{\small R}_{{\small q}%
}^{{\small 1}}})e^{q})\geq z_{i}\text{ or }f(x^{{\small R}_{{\small q}}^{%
{\small 1}}}+(r-x_{q}^{{\small R}_{{\small q}}^{{\small 1}}})e^{q})<z_{i}%
\text{.}
\end{equation*}

\begin{itemize}
\item Suppose $f(x^{{\small R}_{{\small q}}^{{\small 1}}}+(r-x_{q}^{{\small R%
}_{{\small q}}^{{\small 1}}})e^{q})\geq z_{i}$. Since 
\begin{equation*}
x^{{\small R}_{{\small q}}^{{\small 1}}}+(r-x_{q}^{{\small R}_{{\small q}}^{%
{\small 1}}})e^{q}\leq x^{{\small R}_{{\small q}}^{{\small 1}}}+(r-x_{q}^{%
{\small R}_{{\small q}}^{{\small 1}}})e^{q}+(1-x_{p}^{{\small R}_{{\small q}%
}^{{\small 1}}})e^{q}=x^{\left( {\small R}_{{\small pq}}\right) _{{\small p}%
}^{{\small 1}}}+(1-x_{p}^{\left( {\small R}_{{\small pq}}\right) _{{\small p}%
}^{{\small 1}}})e^{p}
\end{equation*}%
or equivalently 
\begin{equation*}
z_{i}\leq f(x^{{\small R}_{{\small q}}^{{\small 1}}}+(r-x_{q}^{{\small R}_{%
{\small q}}^{{\small 1}}})e^{q})\leq f(x^{{\small R}_{{\small q}}^{{\small 1}%
}}+(r-x_{q}^{{\small R}_{{\small q}}^{{\small 1}}})e^{q}+(1-x_{p}^{{\small R}%
_{{\small q}}^{{\small 1}}})e^{q})=f(x^{\left( {\small R}_{{\small pq}%
}\right) _{{\small p}}^{{\small 1}}}+(1-x_{p}^{\left( {\small R}_{{\small pq}%
}\right) _{{\small p}}^{{\small 1}}})e^{p})
\end{equation*}%
it follows that: 
\begin{equation*}
f(x^{\left( {\small R}_{{\small pq}}\right) _{{\small p}}^{{\small 1}%
}}+(1-x_{p}^{\left( {\small R}_{{\small pq}}\right) _{{\small p}}^{{\small 1}%
}})e^{p})\geq z_{i}\text{.}
\end{equation*}

\item If $f(x^{{\small R}_{{\small q}}^{{\small 1}}}+(r-x_{q}^{{\small R}_{%
{\small q}}^{{\small 1}}})e^{q})<z_{i}$, since 
\begin{equation*}
x^{{\small R}_{{\small q}}^{{\small 1}}}+(r-x_{q}^{{\small R}_{{\small q}}^{%
{\small 1}}})e^{q}+(1-r)e^{q}=x^{{\small R}_{{\small q}}^{{\small 1}%
}}+(1-x_{q}^{{\small R}_{{\small q}}^{{\small 1}}})e^{q}
\end{equation*}
\end{itemize}

we have%
\begin{equation*}
f(x^{{\small R}_{{\small q}}^{{\small 1}}}+(r-x_{q}^{{\small R}_{{\small q}%
}^{{\small 1}}})e^{q}+(1-r)e^{q})=f(x^{{\small R}_{{\small q}}^{{\small 1}%
}}+(1-x_{q}^{{\small R}_{{\small q}}^{{\small 1}}})e^{q})\geq z_{i}>f(x^{%
{\small R}_{{\small q}}^{{\small 1}}}+(r-x_{q}^{{\small R}_{{\small q}}^{%
{\small 1}}})e^{q})\text{.}
\end{equation*}%
That is 
\begin{equation*}
f(x^{{\small R}_{{\small q}}^{{\small 1}}}+(r-x_{q}^{{\small R}_{{\small q}%
}^{{\small 1}}})e^{q}+(1-r)e^{q})>f(x^{{\small R}_{{\small q}}^{{\small 1}%
}}+(r-x_{q}^{{\small R}_{{\small q}}^{{\small 1}}})e^{q})\text{.}
\end{equation*}

Since $p\succeq q$, we have 
\begin{equation*}
f(x^{{\small R}_{{\small q}}^{{\small 1}}}+(r-x_{q}^{{\small R}_{{\small q}%
}^{{\small 1}}})e^{q}+(1-x_{p}^{{\small R}_{{\small q}}^{{\small 1}%
}})e^{p})\geq f(x^{{\small R}_{{\small q}}^{{\small 1}}}+(r-x_{q}^{{\small R}%
_{{\small q}}^{{\small 1}}})e^{q}+(1-r)e^{q})\geq z_{i}\text{.}
\end{equation*}%
But 
\begin{equation*}
x^{{\small R}_{{\small q}}^{{\small 1}}}+(r-x_{q}^{{\small R}_{{\small q}}^{%
{\small 1}}})e^{q}+(t_{1}-x_{p}^{{\small R}_{{\small q}}^{{\small 1}%
}})e^{p}=x^{\left( {\small R}_{{\small pq}}\right) _{{\small p}}^{{\small 1}%
}}+(t_{1}-x_{p}^{\left( {\small R}_{{\small pq}}\right) _{{\small p}}^{%
{\small 1}}})e^{p}\text{.}
\end{equation*}

Hence 
\begin{equation*}
f(x^{\left( {\small R}_{{\small pq}}\right) _{{\small p}}^{{\small 1}%
}}+(t_{1}-x_{p}^{\left( {\small R}_{{\small pq}}\right) _{{\small p}}^{%
{\small 1}}})e^{p})\geq z_{i}\text{.}
\end{equation*}%
\newline

We then have 
\begin{equation*}
f(x^{\left( {\small R}_{{\small pq}}\right) _{{\small p}}^{{\small 1}%
}})<z_{i}\text{ and }f(x^{\left( {\small R}_{{\small pq}}\right) _{{\small p}%
}^{{\small 1}}}+(1-x_{p}^{\left( {\small R}_{{\small pq}}\right) _{{\small p}%
}^{{\small 1}}})e^{p})\geq z_{i}
\end{equation*}%
\newline
which implies 
\begin{equation*}
R_{pq}\in \mathcal{R}_{ip}^{+}\left( f\right) \text{.}
\end{equation*}

$2)$ Suppose $sR\left( p\right) >sR\left( q\right) $ and let us show that $%
R_{pq}\in $ $\mathcal{R}_{ip}^{+}\left( f\right) $, that is:%
\begin{equation*}
\left\{ 
\begin{array}{c}
f(x^{\left( {\small R}_{{\small pq}}\right) _{{\small p}}^{{\small j}}})\geq
z_{i} \\ 
f(x^{\left( {\small R}_{{\small pq}}\right) _{{\small p}}^{{\small j}%
}}+(j-x_{p}^{\left( {\small R}_{{\small pq}}\right) _{{\small p}}^{{\small j}%
}})e^{p})<z_{i}%
\end{array}%
\right. \text{ or }\left\{ 
\begin{array}{c}
f(x^{\left( {\small R}_{{\small pq}}\right) _{{\small p}}^{{\small 1}%
}})<z_{i} \\ 
f(x^{\left( {\small R}_{{\small pq}}\right) _{{\small p}}^{{\small 1}%
}}+(1-x_{p}^{\left( {\small R}_{{\small pq}}\right) _{{\small p}}^{{\small 1}%
}})e^{p})\geq z_{i}%
\end{array}%
\right.
\end{equation*}%
By assumption, we have 
\begin{equation*}
R\in \mathcal{R}_{iq}^{+}\left( f\right) ,f\left( x^{{\small R}_{{\small q}%
}^{{\small 1}}}\right) <z_{i}\text{ and }f(x^{{\small R}_{{\small q}}^{%
{\small 1}}}+(1-x_{q}^{{\small R}_{{\small q}}^{{\small 1}}})e^{q})\geq z_{i}%
\text{.}
\end{equation*}

But 
\begin{equation*}
x^{{\small R}_{{\small q}}^{{\small 1}}}=x^{{\small R}_{{\small q}}^{{\small %
1}}}+(1-x_{q}^{{\small R}_{{\small q}}^{{\small 1}}})e^{q}+(s-1)e^{q}\leq x^{%
{\small R}_{{\small q}}^{{\small 1}}}+(1-x_{q}^{{\small R}_{{\small q}}^{%
{\small 1}}})e^{q}\text{.}
\end{equation*}

So%
\begin{equation*}
f\left( x^{{\small R}_{{\small q}}^{{\small 1}}}\right) =f(x^{{\small R}_{%
{\small q}}^{{\small 1}}}+(1-x_{q}^{{\small R}_{{\small q}}^{{\small 1}%
}})e^{q}+(s-1)e^{q})<z_{i}\leq f(x^{{\small R}_{{\small q}}^{{\small 1}%
}}+(1-x_{q}^{{\small R}_{{\small q}}^{{\small 1}}})e^{q})\text{.}
\end{equation*}

which implies 
\begin{equation*}
f(x^{{\small R}_{{\small q}}^{{\small 1}}}+(1-x_{q}^{{\small R}_{{\small q}%
}^{{\small 1}}})e^{q})>f(x^{{\small R}_{{\small q}}^{{\small 1}}}+(1-x_{q}^{%
{\small R}_{{\small q}}^{{\small 1}}})e^{q}+(s-1)e^{q})\text{.}
\end{equation*}

Given that $p\succeq q$, we have 
\begin{equation*}
f(x^{{\small R}_{{\small q}}^{{\small 1}}}+(1-x_{q}^{{\small R}_{{\small q}%
}^{{\small 1}}})e^{q}+(s-x_{p}^{{\small R}_{{\small q}}^{{\small 1}%
}})e^{p})\leq f(x^{{\small R}_{{\small q}}^{{\small 1}}}+(1-x_{q}^{{\small R}%
_{{\small q}}^{{\small 1}}})e^{q}+(s-1)e^{q})<z_{i}\text{.}
\end{equation*}%
But 
\begin{equation*}
f(x^{{\small R}_{{\small q}}^{{\small 1}}}+(1-x_{q}^{{\small R}_{{\small q}%
}^{{\small 1}}})e^{q}+(s-x_{p}^{{\small R}_{{\small q}}^{{\small 1}%
}})e^{p})=x^{\left( {\small R}_{{\small pq}}\right) _{{\small p}}^{{\small 1}%
}}\text{.}
\end{equation*}

Hence 
\begin{equation*}
f(x^{\left( {\small R}_{{\small pq}}\right) _{{\small p}}^{{\small 1}%
}})<z_{i}\text{.}
\end{equation*}

We have 
\begin{equation*}
x^{\left( {\small R}_{{\small pq}}\right) _{{\small p}}^{{\small 1}%
}}+(1-x_{p}^{\left( {\small R}_{{\small pq}}\right) _{{\small p}}^{{\small 1}%
}})e^{p}=x^{{\small R}_{{\small q}}^{{\small 1}}}+(1-x_{q}^{{\small R}_{%
{\small q}}^{{\small 1}}})e^{q}
\end{equation*}

which implies 
\begin{equation*}
f(x^{\left( {\small R}_{{\small pq}}\right) _{{\small p}}^{{\small 1}%
}}+(1-x_{p}^{\left( {\small R}_{{\small pq}}\right) _{{\small p}}^{{\small 1}%
}})e^{p})\geq z_{i}
\end{equation*}

because 
\begin{equation*}
f\left( x^{{\small R}_{{\small q}}^{{\small 1}}}+(1-x_{q}^{{\small R}_{%
{\small q}}^{{\small 1}}})e^{q}\right) \geq z_{i}\text{.}
\end{equation*}

We then obtain that: 
\begin{equation*}
f(x^{\left( {\small R}_{{\small pq}}\right) _{{\small p}}^{{\small 1}%
}})<z_{i}\text{ and }f(x^{\left( {\small R}_{{\small pq}}\right) _{{\small p}%
}^{{\small 1}}}+(1-x_{p}^{\left( {\small R}_{{\small pq}}\right) _{{\small p}%
}^{{\small 1}}})e^{p})\geq z_{i}
\end{equation*}

from which it follows that 
\begin{equation*}
R_{pq}\in \mathcal{R}_{ip}^{+}\left( f\right) \text{.}
\end{equation*}
\end{proof}

\bigskip

In the sequel, given $a\in N$, $i\in \left\{ 1,...,k-1\right\} $, we define
the following sets:%
\begin{equation*}
\begin{array}{c}
\mathcal{R}_{ia}^{+}=\left\{ R=(sR,\text{ }x^{R})\in \text{ }\mathcal{R}%
_{ia}^{+}:\left( 
\begin{tabular}{l}
$f(x^{{\small R}_{{\small a}}^{{\small j}}})\geq z_{i}$ and \\ 
$f(x^{{\small R}_{{\small a}}^{{\small j}}}+(j-x_{a}^{{\small R}_{{\small a}%
}^{{\small 1}}})e^{a})<z_{i}$%
\end{tabular}%
\right) \right\}  \\ 
\mathcal{R}_{ia}^{-}=\left\{ R=(sR,\text{ }x^{R})\in \text{ }\mathcal{R}%
_{ia}^{+}:\left( 
\begin{tabular}{l}
$f\left( x^{{\small R}_{{\small a}}^{{\small 1}}}\right) <z_{i}$ and \\ 
$f(x^{{\small R}_{{\small a}}^{{\small j}}}+(1-x_{a}^{{\small R}_{{\small a}%
}^{{\small 1}}})e^{a})\geq z_{i}$%
\end{tabular}%
\right) \right\} 
\end{array}%
\end{equation*}

\bigskip

The following lemma is needed.

\begin{lemma}
Let $\left( N,T,f\right) $ be a game ladder, $p,$ $q\in N$ two players such
that $p\succeq q$, and $R$ an ordered allocation of positions such that $%
R\in $ $\mathcal{R}_{iq}^{+},$ $sR\left( p\right) <sR\left( q\right) .$

$1)$ If 
\begin{equation*}
\left\{ 
\begin{array}{l}
R\in \text{ }\mathcal{R}_{iq}^{+}\text{ and } \\ 
x_{p}^{R}\leq x_{q}^{R}%
\end{array}%
\right. \text{ or }\left\{ 
\begin{array}{l}
R\in \text{ }\mathcal{R}_{iq}^{+},\text{ }x_{p}^{R}>x_{q}^{R}\text{ and } \\ 
R^{\prime }=\left(
sR,x^{R}+(x_{p}^{R}-x_{q}^{R})e^{q}+(x_{q}^{R}-x_{p}^{R})e^{p}\right) \in 
\text{ }\mathcal{R}_{iq}^{+}%
\end{array}%
\right.
\end{equation*}%
then 
\begin{equation*}
R_{pq}^{0}\in \mathcal{R}_{ip}^{+}\text{.}
\end{equation*}

$2)$ If%
\begin{equation*}
\left\{ 
\begin{array}{l}
R\in \mathcal{R}_{iq}^{-}\text{ and } \\ 
x_{p}^{R}\geq x_{q}^{R}%
\end{array}%
\right. \text{ or }\left\{ 
\begin{array}{l}
R\in \mathcal{R}_{iq}^{-},\text{ }x_{p}^{R}<x_{q}^{R}\text{ and } \\ 
R^{\prime }=\left(
sR,x^{R}+(x_{p}^{R}-x_{q}^{R})e^{q}+(x_{q}^{R}-x_{p}^{R})e^{p}\right) \in 
\mathcal{R}_{iq}^{-}%
\end{array}%
\right.
\end{equation*}%
then 
\begin{equation*}
R_{pq}^{0}\in \mathcal{R}_{ip}^{-}\text{.}
\end{equation*}
\end{lemma}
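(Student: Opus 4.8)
The plan is to imitate the proofs of Lemmas 4 and 5, replacing the re-ordering-and-re-positioning operation $R\mapsto R_{pq}$ by the pure re-ordering operation $R\mapsto R_{pq}^{0}$ and using part 1 of Remark 2 in place of part 2. I will carry out Part 1; Part 2 is its mirror image, obtained by interchanging the top position $j$ with the bottom position $1$, $R_{q}^{j}$ with $R_{q}^{1}$, the condition $f(\cdot)\geq z_{i}$ with the condition $f(\cdot)<z_{i}$, and $\mathcal{R}^{+}$ with $\mathcal{R}^{-}$, so that no genuinely new argument is needed there.

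The first step I would take is to record the structural effect of the hypothesis $sR(p)<sR(q)$: the players entering no later than $p$ in $R_{pq}^{0}$ are exactly the players entering no later than $q$ in $R$, so part 1 of Remark 2 gives $x^{(R_{pq}^{0})_{p}^{j}}=x^{R_{q}^{j}}$ and $x^{(R_{pq}^{0})_{p}^{1}}=x^{R_{q}^{1}}$, and $p$ occupies position $x_{p}^{R}$ in $(R_{pq}^{0})_{p}^{j}$. Substituting these identities into the characterization of $\mathcal{R}_{ip}^{+}$ from Lemma 1 (applied to the player $p$ and the allocation $R_{pq}^{0}$), it suffices for $R_{pq}^{0}\in\mathcal{R}_{ip}^{+}$ to establish
\begin{equation*}
f(x^{R_{q}^{j}})\geq z_{i}\qquad\text{and}\qquad f\bigl(x^{R_{q}^{j}}+(j-x_{p}^{R})e^{p}\bigr)<z_{i},
\end{equation*}
whereas the hypothesis $R\in\mathcal{R}_{iq}^{+}$ supplies exactly these two statements with $q$ in place of $p$, namely $f(x^{R_{q}^{j}})\geq z_{i}$ and $f(x^{R_{q}^{j}}+(j-x_{q}^{R})e^{q})<z_{i}$. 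The first inequality is therefore already in hand, and everything reduces to passing, inside the single profile $x^{R_{q}^{j}}$, from the statement about moving the $q$-coordinate to $j$ to the statement about moving the $p$-coordinate to $j$.

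In the first sub-case, $x_{p}^{R}\leq x_{q}^{R}$, this passage is carried out by two elementary moves of exactly the kind found in Lemmas 4 and 5: first raise the $p$-coordinate of $x^{R_{q}^{j}}$ from $x_{p}^{R}$ to $x_{q}^{R}$ (a change licensed by $x_{p}^{R}\leq x_{q}^{R}$ and whose effect on $f$ is controlled by monotonicity), then interchange the $p$- and $q$-coordinates of the resulting profile (whose effect on $f$ is controlled by the defining inequality of $p\succeq q$); chaining the two yields $f(x^{R_{q}^{j}}+(j-x_{p}^{R})e^{p})<z_{i}$. In the second sub-case, $x_{p}^{R}>x_{q}^{R}$, the two coordinates sit in the wrong order for the first move, which is exactly why the auxiliary allocation $R^{\prime}=(sR,\,x^{R}+(x_{p}^{R}-x_{q}^{R})e^{q}+(x_{q}^{R}-x_{p}^{R})e^{p})$ --- that is, $R$ with the positions of $p$ and $q$ swapped --- is introduced: a direct computation shows that $x^{(R^{\prime})_{q}^{j}}$ is $x^{R_{q}^{j}}$ with its $p$- and $q$-coordinates interchanged and that $x_{q}^{R^{\prime}}=x_{p}^{R}$, so the hypothesis $R^{\prime}\in\mathcal{R}_{iq}^{+}$ gives directly that $f$ of the profile obtained from $x^{R_{q}^{j}}$ by setting its $p$-coordinate to $x_{q}^{R}$ and its $q$-coordinate to $j$ is $<z_{i}$; one application of the defining inequality of $p\succeq q$ (legitimate because the two positions $x_{q}^{R}$ and $j$ are now in the favorable order) then interchanges the $p$- and $q$-coordinates and produces the required $f(x^{R_{q}^{j}}+(j-x_{p}^{R})e^{p})<z_{i}$.

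I expect the real difficulty to be bookkeeping, not strategy: deriving the identities of Remark 2 part 1 and the identity relating $x^{(R^{\prime})_{q}^{j}}$ to $x^{R_{q}^{j}}$ with full care, keeping straight which coordinate of which truncated profile is being moved in each branch, and invoking monotonicity and the defining inequality of $p\succeq q$ with the correct orientation each time --- the same meticulous case chase that occupies Lemmas 4 and 5, which can be transcribed from them once the reductions above are in place. Part 2 then follows by the mirrored computation, with $\mathcal{R}_{iq}^{-},\mathcal{R}_{ip}^{-}$ replacing $\mathcal{R}_{iq}^{+},\mathcal{R}_{ip}^{+}$, $R_{q}^{1}$ replacing $R_{q}^{j}$, the bottom position $1$ replacing the top position $j$, and the inequalities reversed.
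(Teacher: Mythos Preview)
Your proposal is correct and follows essentially the same route as the paper: reduce via the identity $x^{(R_{pq}^{0})_{p}^{j}}=x^{R_{q}^{j}}$ from part~1 of Remark~2, get the first defining inequality of $\mathcal{R}_{ip}^{+}$ for free, and then derive the second by combining monotonicity with $p\succeq q$ in the first sub-case and by invoking the $R'$ hypothesis together with $p\succeq q$ in the second sub-case, with Part~2 obtained by the mirror substitution. The paper's execution of the first sub-case inserts an auxiliary case split on whether $f$ at the intermediate profile with both $p$ and $q$ at level $x_{q}^{R}$ lies above or below $z_{i}$ (using only monotonicity in one branch and both monotonicity and $p\succeq q$ in the other), but this is precisely the kind of bookkeeping you anticipate and defer to ``the same meticulous case chase that occupies Lemmas~4 and~5.''
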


\begin{proof}
Let $\left( N,T,f\right) $ be a game ladder, $p,$ $q\in N$ and $R$ an
ordered allocation of positions such that $sR\left( p\right) <sR\left(
q\right) $ and $R\in $ $\mathcal{R}_{iq}^{+}$.

$1)$ Suppose $\left\{ 
\begin{array}{l}
R\in \text{ }\mathcal{R}_{iq}^{+} \\ 
x_{p}^{R}=r\leq s=x_{q}^{R}%
\end{array}%
\right. $ and let us show that $R_{pq}^{0}\in $ $\mathcal{R}_{ip}^{+}.$

It suffices to show that: 
\begin{equation*}
\left\{ 
\begin{array}{l}
f(x^{\left( {\small R}_{{\small pq}}^{{\small 0}}\right) _{{\small p}}^{%
{\small j}}})\geq z_{i} \\ 
f(x^{\left( {\small R}_{{\small pq}}^{{\small 0}}\right) _{{\small p}}^{%
{\small j}}}+(j-x_{p}^{\left( {\small R}_{{\small pq}}^{{\small 0}}\right) _{%
{\small p}}^{{\small j}}})e^{p})<z_{i}%
\end{array}%
\right.
\end{equation*}

Since 
\begin{equation*}
x^{{\small R}_{{\small q}}^{{\small j}}}=x^{\left( {\small R}_{{\small pq}}^{%
{\small 0}}\right) _{{\small p}}^{{\small j}}}
\end{equation*}

it follows that 
\begin{equation*}
f(x^{\left( {\small R}_{{\small pq}}^{{\small 0}}\right) _{{\small p}}^{%
{\small j}}})\geq z_{i}
\end{equation*}

because 
\begin{equation*}
f(x^{{\small R}_{{\small q}}^{{\small j}}})\geq z_{i}\text{.}
\end{equation*}

Let us show that 
\begin{equation*}
f(x^{\left( {\small R}_{{\small pq}}^{{\small 0}}\right) _{{\small p}}^{%
{\small j}}}+(j-x_{p}^{\left( {\small R}_{{\small pq}}^{{\small 0}}\right) _{%
{\small p}}^{{\small j}}})e^{p})<z_{i}\text{.}
\end{equation*}

We have 
\begin{equation*}
f(x^{{\small R}_{{\small q}}^{{\small j}}}+(s-x_{p}^{{\small R}_{{\small q}%
}^{{\small j}}})e^{p})\geq z_{i}\text{ or }f(x^{{\small R}_{{\small q}}^{%
{\small j}}}+(s-x_{p}^{{\small R}_{{\small q}}^{{\small j}}})e^{p})<z_{i}
\end{equation*}

with $p$ and $q$ being assigned to the same task $s$ in the task profile $x^{%
{\small R}_{{\small q}}^{{\small j}}}+(s-x_{p}^{{\small R}_{{\small q}}^{%
{\small j}}})e^{p}$.

\begin{itemize}
\item If $f(x^{{\small R}_{{\small q}}^{{\small j}}}+(s-x_{p}^{{\small R}_{%
{\small q}}^{{\small j}}})e^{p})\geq z_{i}$, then, since 
\begin{equation*}
x^{{\small R}_{{\small q}}^{{\small j}}}+(s-x_{p}^{{\small R}_{{\small q}}^{%
{\small j}}})e^{p}+(j-x_{q}^{{\small R}_{{\small q}}^{{\small j}}})e^{q}\leq
x^{{\small R}_{{\small q}}^{{\small j}}}+(j-x_{q}^{{\small R}_{{\small q}}^{%
{\small j}}})e^{q}
\end{equation*}
\end{itemize}

by monotonicity, we have 
\begin{equation*}
f(x^{{\small R}_{{\small q}}^{{\small j}}}+(s-x_{p}^{{\small R}_{{\small q}%
}^{{\small j}}})e^{p}+(j-x_{q}^{{\small R}_{{\small q}}^{{\small j}%
}})e^{q})\leq f(x^{{\small R}_{{\small q}}^{{\small j}}}+(j-x_{q}^{{\small R}%
_{{\small q}}^{{\small j}}})e^{q})<z_{i}
\end{equation*}

which implies: 
\begin{equation*}
f(x^{{\small R}_{{\small q}}^{{\small j}}}+(s-x_{p}^{{\small R}_{{\small q}%
}^{{\small j}}})e^{p}+(j-x_{q}^{{\small R}_{{\small q}}^{{\small j}%
}})e^{q})<f(x^{{\small R}_{{\small q}}^{{\small j}}}+(s-x_{p}^{{\small R}_{%
{\small q}}^{{\small j}}})e^{p}).
\end{equation*}%
Given that 
\begin{equation*}
p\succeq q
\end{equation*}

and 
\begin{equation*}
f(x^{{\small R}_{{\small q}}^{{\small j}}}+(s-x_{p}^{{\small R}_{{\small q}%
}^{{\small j}}})e^{p}+(j-x_{q}^{{\small R}_{{\small q}}^{{\small j}%
}})e^{q})<f(x^{{\small R}_{{\small q}}^{{\small j}}}+(s-x_{p}^{{\small R}_{%
{\small q}}^{{\small j}}})e^{p})
\end{equation*}

we have%
\begin{equation*}
f(x^{{\small R}_{{\small q}}^{{\small j}}}+(s-x_{p}^{{\small R}_{{\small q}%
}^{{\small j}}})e^{p}+(j-s)e^{p})\leq f(x^{{\small R}_{{\small q}}^{{\small j%
}}}+(s-x_{p}^{{\small R}_{{\small q}}^{{\small j}}})e^{p}+(j-x_{q}^{{\small R%
}_{{\small q}}^{{\small j}}})e^{q})<z_{i}\text{.}
\end{equation*}%
But 
\begin{equation*}
x^{{\small R}_{{\small q}}^{{\small j}}}+(s-x_{p}^{{\small R}_{{\small q}}^{%
{\small j}}})e^{p}+(j-s)e^{p}=x^{{\small R}_{{\small q}}^{{\small j}%
}}+(j-x_{p}^{{\small R}_{{\small q}}^{{\small j}}})e^{p}=x^{\left( {\small R}%
_{{\small pq}}^{{\small 0}}\right) _{{\small p}}^{{\small j}%
}}+(j-x_{p}^{\left( {\small R}_{{\small pq}}^{{\small 0}}\right) _{{\small p}%
}^{{\small j}}})e^{p}
\end{equation*}

that is, 
\begin{equation*}
f(x^{\left( {\small R}_{{\small pq}}^{{\small 0}}\right) _{{\small p}}^{%
{\small j}}}+(j-x_{p}^{\left( {\small R}_{{\small pq}}^{{\small 0}}\right) _{%
{\small p}}^{{\small j}}})e^{p})<z_{i}\text{.}
\end{equation*}

\begin{itemize}
\item If $f(x^{{\small R}_{{\small q}}^{{\small j}}}+(s-x_{p}^{{\small R}_{%
{\small q}}^{{\small j}}})e^{p})<z_{i}$, then, since
\end{itemize}

\begin{equation*}
x^{{\small R}_{{\small q}}^{{\small j}}}+(j-x_{p}^{{\small R}_{{\small q}}^{%
{\small j}}})e^{p}\leq x^{{\small R}_{{\small q}}^{{\small j}}}+(s-x_{p}^{%
{\small R}_{{\small q}}^{{\small j}}})e^{p}
\end{equation*}

by monotonicity, we have 
\begin{equation*}
f(x^{{\small R}_{{\small q}}^{{\small j}}}+(j-x_{p}^{{\small R}_{{\small q}%
}^{{\small j}}})e^{p})\leq f(x^{{\small R}_{{\small q}}^{{\small j}%
}}+(s-x_{p}^{{\small R}_{{\small q}}^{{\small j}}})e^{p})<z_{i}
\end{equation*}%
that is 
\begin{equation*}
f(x^{\left( {\small R}_{{\small pq}}^{{\small 0}}\right) _{{\small p}}^{%
{\small j}}}+(j-x_{p}^{\left( {\small R}_{{\small pq}}^{{\small 0}}\right) _{%
{\small p}}^{{\small j}}})e^{p})<z_{i}
\end{equation*}

because 
\begin{equation*}
x^{{\small R}_{{\small q}}^{{\small j}}}=x^{\left( {\small R}_{{\small pq}}^{%
{\small 0}}\right) _{{\small p}}^{{\small j}}}\text{.}
\end{equation*}

We then have 
\begin{equation*}
f(x^{\left( {\small R}_{{\small pq}}^{{\small 0}}\right) _{{\small p}}^{%
{\small j}}}+(j-x_{p}^{\left( {\small R}_{{\small pq}}^{{\small 0}}\right) _{%
{\small p}}^{{\small j}}})e^{p})<z_{i}\text{.}
\end{equation*}%
\newline
Therefore 
\begin{equation*}
f(x^{\left( {\small R}_{{\small pq}}^{{\small 0}}\right) _{{\small p}}^{%
{\small j}}})\geq z_{i}\text{ and }f(x^{\left( {\small R}_{{\small pq}}^{%
{\small 0}}\right) _{{\small p}}^{{\small j}}}+(j-x_{p}^{\left( {\small R}_{%
{\small pq}}^{{\small 0}}\right) _{{\small p}}^{{\small j}}})e^{p})<z_{i}%
\text{.}
\end{equation*}

Hence 
\begin{equation*}
R_{pq}^{0}\in \mathcal{R}_{ip}^{+}\text{.}
\end{equation*}

Suppose that $\left\{ 
\begin{array}{l}
R\in \text{ }\mathcal{R}_{iq}^{+},\text{ }x_{p}^{R}=s>r=x_{q}^{R}\text{ and }
\\ 
R^{\prime }=\left(
sR,x_{p}^{R}+(x_{p}^{R}-x_{q}^{R})e^{q}+(x_{q}^{R}-x_{p}^{R})e^{p}\right)
\in \text{ }\mathcal{R}_{iq}^{+}%
\end{array}%
\right. $ and let us show that $R_{pq}^{0}\in $ $\mathcal{R}_{ip}^{+}$.

Since 
\begin{equation*}
x^{\left( {\small R}_{{\small pq}}^{{\small 0}}\right) _{{\small p}}^{%
{\small j}}}=x^{{\small R}_{{\small q}}^{{\small j}}}
\end{equation*}

we have 
\begin{equation*}
f(x^{\left( {\small R}_{{\small pq}}^{{\small 0}}\right) _{{\small p}}^{%
{\small j}}})\geq z_{i}
\end{equation*}

because 
\begin{equation*}
f(x^{{\small R}_{{\small q}}^{{\small j}}})\geq z_{i}
\end{equation*}

Let us show that $f(x^{\left( {\small R}_{{\small pq}}^{{\small 0}}\right) _{%
{\small p}}^{{\small j}}}+(j-x_{p}^{\left( {\small R}_{{\small pq}}^{{\small %
0}}\right) _{{\small p}}^{{\small j}}})e^{p})<z_{i}$.

By monotonicity, we have 
\begin{equation*}
f(x^{\left( {\small R}_{{\small pq}}^{{\small 0}}\right) _{{\small p}}^{%
{\small j}}}+(r-x_{p}^{\left( {\small R}_{{\small pq}}^{{\small 0}}\right) _{%
{\small p}}^{{\small j}}})e^{p})\geq z_{i}\text{.}
\end{equation*}

Since 
\begin{equation*}
x^{{\small R}_{{\small q}}^{\prime {\small j}}}+(j-x_{q}^{{\small R}_{%
{\small q}}^{\prime {\small j}}})e^{q}=x^{\left( {\small R}_{{\small pq}}^{%
{\small 0}}\right) _{{\small p}}^{{\small j}}}+(r-x_{p}^{\left( {\small R}_{%
{\small pq}}^{{\small 0}}\right) _{{\small p}}^{{\small j}%
}})e^{p}+(j-x_{q}^{\left( {\small R}_{{\small pq}}^{{\small 0}}\right) _{%
{\small p}}^{{\small j}}})e^{q}
\end{equation*}

and\ 
\begin{equation*}
f(x^{{\small R}_{{\small q}}^{\prime {\small j}}}+(j-x_{q}^{{\small R}_{%
{\small q}}^{\prime {\small j}}})e^{q})<z_{i}
\end{equation*}%
\newline
it follows that 
\begin{equation*}
f(x^{\left( {\small R}_{{\small pq}}^{{\small 0}}\right) _{{\small p}}^{%
{\small j}}}+(r-x_{p}^{\left( {\small R}_{{\small pq}}^{{\small 0}}\right) _{%
{\small p}}^{{\small j}}})e^{p}+(j-x_{q}^{\left( {\small R}_{{\small pq}}^{%
{\small 0}}\right) _{{\small p}}^{{\small j}}})e^{q})<z_{i}
\end{equation*}%
or equivalently 
\begin{equation*}
f(x^{\left( {\small R}_{{\small pq}}^{{\small 0}}\right) _{{\small p}}^{%
{\small j}}}+(r-x_{p}^{\left( {\small R}_{{\small pq}}^{{\small 0}}\right) _{%
{\small p}}^{{\small j}}})e^{p})>f(x^{\left( {\small R}_{{\small pq}}^{%
{\small 0}}\right) _{{\small p}}^{{\small j}}}+(r-x_{p}^{\left( {\small R}_{%
{\small pq}}^{{\small 0}}\right) _{{\small p}}^{{\small j}%
}})e^{p}+(j-x_{q}^{\left( {\small R}_{{\small pq}}^{{\small 0}}\right) _{%
{\small p}}^{{\small j}}})e^{q})
\end{equation*}%
Given that $p\succeq q,$ we have 
\begin{equation*}
f(x^{\left( {\small R}_{{\small pq}}^{{\small 0}}\right) _{{\small p}}^{%
{\small j}}}+(r-x_{p}^{\left( {\small R}_{{\small pq}}^{{\small 0}}\right) _{%
{\small p}}^{{\small j}}})e^{p}+(j-x_{q}^{\left( {\small R}_{{\small pq}}^{%
{\small 0}}\right) _{{\small p}}^{{\small j}}})e^{q})\geq f(x^{\left( 
{\small R}_{{\small pq}}^{{\small 0}}\right) _{{\small p}}^{{\small j}%
}}+(r-x_{p}^{\left( {\small R}_{{\small pq}}^{{\small 0}}\right) _{{\small p}%
}^{{\small j}}})e^{p}+(j-r)e^{p}),
\end{equation*}%
that is, 
\begin{equation*}
f(x^{\left( {\small R}_{{\small pq}}^{{\small 0}}\right) _{{\small p}}^{%
{\small j}}}+(r-x_{p}^{\left( {\small R}_{{\small pq}}^{{\small 0}}\right) _{%
{\small p}}^{{\small j}}})e^{p}+(j-r)e^{p})<z_{i}
\end{equation*}

which implies 
\begin{equation*}
f(x^{\left( {\small R}_{{\small pq}}^{{\small 0}}\right) _{{\small p}}^{%
{\small j}}}+(j-x_{p}^{\left( {\small R}_{{\small pq}}^{{\small 0}}\right) _{%
{\small p}}^{{\small j}}})e^{p})<z_{i}
\end{equation*}

because 
\begin{equation*}
x^{\left( {\small R}_{{\small pq}}^{{\small 0}}\right) _{{\small p}}^{%
{\small j}}}+(r-x_{p}^{\left( {\small R}_{{\small pq}}^{{\small 0}}\right) _{%
{\small p}}^{{\small j}}})e^{p}+(j-r)e^{p}=x^{\left( {\small R}_{{\small pq}%
}^{{\small 0}}\right) _{{\small p}}^{{\small j}}}+(j-x_{p}^{\left( {\small R}%
_{{\small pq}}^{{\small 0}}\right) _{{\small p}}^{{\small j}}})e^{p}\text{.}
\end{equation*}%
Therefore, we have 
\begin{equation*}
f(x^{\left( {\small R}_{{\small pq}}^{{\small 0}}\right) _{{\small p}}^{%
{\small j}}})\geq z_{i}\text{ and }f(x^{\left( {\small R}_{{\small pq}}^{%
{\small 0}}\right) _{{\small p}}^{{\small j}}}+(j-x_{p}^{\left( {\small R}_{%
{\small pq}}^{{\small 0}}\right) _{{\small p}}^{{\small j}}})e^{p})<z_{i}%
\text{.}
\end{equation*}

Hence 
\begin{equation*}
R_{pq}^{0}\in \mathcal{R}_{ip}^{+}\text{.}
\end{equation*}

$2)$ Suppose $\left\{ 
\begin{array}{l}
R\in \mathcal{R}_{iq}^{-} \\ 
x_{p}^{R}=r\geq s=x_{q}^{R}%
\end{array}%
\right. $ and let us show that $R_{pq}^{0}\in $ $\mathcal{R}_{ip}^{+}$.

Il suffices to show that 
\begin{equation*}
\left\{ 
\begin{array}{c}
f(x^{\left( {\small R}_{{\small pq}}^{{\small 0}}\right) _{{\small p}}^{%
{\small 1}}})<z_{i} \\ 
f(x^{\left( {\small R}_{{\small pq}}^{{\small 0}}\right) _{{\small p}}^{%
{\small 1}}}+(1-x_{p}^{\left( {\small R}_{{\small pq}}^{{\small 0}}\right) _{%
{\small p}}^{{\small 1}}})e^{p})\geq z_{i}\text{.}%
\end{array}%
\right.
\end{equation*}

Given that 
\begin{equation*}
x^{{\small R}_{{\small q}}^{{\small 1}}}=x^{\left( {\small R}_{{\small pq}}^{%
{\small 0}}\right) _{{\small p}}^{{\small 1}}}
\end{equation*}

we have 
\begin{equation*}
f(x^{\left( {\small R}_{{\small pq}}^{{\small 0}}\right) _{{\small p}}^{%
{\small 1}}})<z_{i}
\end{equation*}

because $f\left( x^{{\small R}_{{\small q}}^{{\small 1}}}\right) <z_{i}$.

Let us show that $f(x^{\left( {\small R}_{{\small pq}}^{{\small 0}}\right) _{%
{\small p}}^{{\small 1}}}+(1-x_{p}^{\left( {\small R}_{{\small pq}}^{{\small %
0}}\right) _{{\small p}}^{{\small 1}}})e^{p})\geq z_{i}$.

We have 
\begin{equation*}
f(x^{{\small R}_{{\small q}}^{{\small 1}}}+(s-x_{p}^{{\small R}_{{\small q}%
}^{{\small 1}}})e^{p})\geq z_{i}\text{ or }f(x^{{\small R}_{{\small q}}^{%
{\small 1}}}+(s-x_{p}^{{\small R}_{{\small q}}^{{\small 1}}})e^{p})<z_{i}
\end{equation*}

with $p$ and $q$ being assigned to the task $s$ in the task profile $x^{%
{\small R}_{{\small q}}^{{\small 1}}}+(s-x_{p}^{{\small R}_{{\small q}}^{%
{\small 1}}})e^{p}$.

\begin{itemize}
\item If $f(x^{{\small R}_{{\small q}}^{{\small 1}}}+(s-x_{p}^{{\small R}_{%
{\small q}}^{{\small 1}}})e^{p})\geq z_{i},$ then, given that
\end{itemize}

\begin{equation*}
x^{{\small R}_{{\small q}}^{{\small 1}}}+(s-x_{p}^{{\small R}_{{\small q}}^{%
{\small 1}}})e^{p}\leq x^{{\small R}_{{\small q}}^{{\small 1}}}+(1-x_{p}^{%
{\small R}_{{\small q}}^{{\small 1}}})e^{p}
\end{equation*}

by monotonicity, we have 
\begin{equation*}
f(x^{{\small R}_{{\small q}}^{{\small 1}}}+(1-x_{p}^{{\small R}_{{\small q}%
}^{{\small 1}}})e^{p})\geq f(x^{{\small R}_{{\small q}}^{{\small 1}%
}}+(s-x_{p}^{{\small R}_{{\small q}}^{{\small 1}}})e^{p})\geq z_{i},
\end{equation*}%
that is, 
\begin{equation*}
f(x^{{\small R}_{{\small q}}^{{\small 1}}}+(1-x_{p}^{{\small R}_{{\small q}%
}^{{\small 1}}})e^{p})\geq z_{i}\text{.}
\end{equation*}

Since 
\begin{equation*}
x^{{\small R}_{{\small q}}^{{\small 1}}}+(1-x_{p}^{{\small R}_{{\small q}}^{%
{\small 1}}})e^{p}=x^{\left( {\small R}_{{\small pq}}^{{\small 0}}\right) _{%
{\small p}}^{{\small 1}}}+(1-x_{p}^{\left( {\small R}_{{\small pq}}^{{\small %
0}}\right) _{{\small p}}^{{\small 1}}})e^{p}
\end{equation*}

we deduce that%
\begin{equation*}
f(x^{\left( {\small R}_{{\small pq}}^{{\small 0}}\right) _{{\small p}}^{%
{\small 1}}}+(1-x_{p}^{\left( {\small R}_{{\small pq}}^{{\small 0}}\right) _{%
{\small p}}^{{\small 1}}})e^{p})\geq z_{i}.
\end{equation*}

\begin{itemize}
\item If $f(x^{{\small R}_{{\small q}}^{{\small 1}}}+(s-x_{p}^{{\small R}_{%
{\small q}}^{{\small 1}}})e^{p})<z_{i}$, then , since
\end{itemize}

\begin{equation*}
x^{{\small R}_{{\small q}}^{{\small 1}}}+(1-x_{q}^{{\small R}_{{\small q}}^{%
{\small 1}}})e^{q}\leq x^{{\small R}_{{\small q}}^{{\small 1}}}+(1-x_{q}^{%
{\small R}_{{\small q}}^{{\small 1}}})e^{q}+(s-x_{p}^{{\small R}_{{\small q}%
}^{{\small 1}}})e^{p}=x^{{\small R}_{{\small q}}^{{\small 1}}}+(s-x_{p}^{%
{\small R}_{{\small q}}^{{\small 1}}})e^{p}+(1-x_{q}^{{\small R}_{{\small q}%
}^{{\small 1}}})e^{q}
\end{equation*}

by monotonicity, we have 
\begin{equation*}
f(x^{{\small R}_{{\small q}}^{{\small 1}}}+(s-x_{p}^{{\small R}_{{\small q}%
}^{{\small 1}}})e^{p}+(1-x_{q}^{{\small R}_{{\small q}}^{{\small 1}%
}})e^{q})\geq z_{i}\text{ for }f(x^{{\small R}_{{\small q}}^{{\small 1}%
}}+(1-x_{q}^{{\small R}_{{\small q}}^{{\small 1}}})e^{q})\geq z_{i}\text{.}
\end{equation*}%
\newline

We therefore have: 
\begin{equation*}
f(x^{{\small R}_{{\small q}}^{{\small 1}}}+(s-x_{p}^{{\small R}_{{\small q}%
}^{{\small 1}}})e^{p}+(1-x_{q}^{{\small R}_{{\small q}}^{{\small 1}%
}})e^{q})\geq z_{i}>f(x^{{\small R}_{{\small q}}^{{\small 1}}}+(s-x_{p}^{%
{\small R}_{{\small q}}^{{\small 1}}})e^{p});
\end{equation*}%
in the task profile $x^{{\small R}_{{\small q}}^{{\small 1}}}+(s-x_{p}^{%
{\small R}_{{\small q}}^{{\small 1}}})e^{p}$, with $p$ and $q$ are assigned
to the same task $s$.

Since $p\succeq q$ and 
\begin{equation*}
f(x^{{\small R}_{{\small q}}^{{\small 1}}}+(s-x_{p}^{{\small R}_{{\small q}%
}^{{\small 1}}})e^{p}+(1-x_{q}^{{\small R}_{{\small q}}^{{\small 1}%
}})e^{q})>f(x^{{\small R}_{{\small q}}^{{\small 1}}}+(s-x_{p}^{{\small R}_{%
{\small q}}^{{\small 1}}})e^{p})
\end{equation*}

we have 
\begin{equation*}
f(x^{{\small R}_{{\small q}}^{{\small 1}}}+(s-x_{p}^{{\small R}_{{\small q}%
}^{{\small 1}}})e^{p}+(1-s)e^{p})\geq f(x^{{\small R}_{{\small q}}^{{\small 1%
}}}+(s-x_{p}^{{\small R}_{{\small q}}^{{\small 1}}})e^{p}+(1-x_{q}^{{\small R%
}_{{\small q}}^{{\small 1}}})e^{q})\geq z_{i}\text{.}
\end{equation*}%
But 
\begin{equation*}
x^{{\small R}_{{\small q}}^{{\small 1}}}+(s-x_{p}^{{\small R}_{{\small q}}^{%
{\small 1}}})e^{p}+(1-s)e^{p}=x^{{\small R}_{{\small q}}^{{\small 1}%
}}+(1-x_{p}^{{\small R}_{{\small q}}^{{\small 1}}})e^{p}=x^{\left( {\small R}%
_{{\small pq}}^{{\small 0}}\right) _{{\small p}}^{{\small 1}%
}}+(1-x_{p}^{\left( {\small R}_{{\small pq}}^{{\small 0}}\right) _{{\small p}%
}^{{\small 1}}})e^{p}
\end{equation*}%
because 
\begin{equation*}
x^{{\small R}_{{\small q}}^{{\small 1}}}=x^{\left( {\small R}_{{\small pq}}^{%
{\small 0}}\right) _{{\small p}}^{{\small 1}}}\text{.}
\end{equation*}

That is, 
\begin{equation*}
f(x^{\left( {\small R}_{{\small pq}}^{{\small 0}}\right) _{{\small p}}^{%
{\small 1}}}+(1-x_{p}^{\left( {\small R}_{{\small pq}}^{{\small 0}}\right) _{%
{\small p}}^{{\small 1}}})e^{p})\geq z_{i}
\end{equation*}%
\newline
because 
\begin{equation*}
f(x^{{\small R}_{{\small q}}^{{\small 1}}}+(s-x_{p}^{{\small R}_{{\small q}%
}^{{\small 1}}})e^{p}+(1-s)e^{p})\geq z_{i}.
\end{equation*}

We therefore have 
\begin{equation*}
f(x^{\left( {\small R}_{{\small pq}}^{{\small 0}}\right) _{{\small p}}^{%
{\small 1}}})<z_{i}\text{ and }f(x^{\left( {\small R}_{{\small pq}}^{{\small %
0}}\right) _{{\small p}}^{{\small 1}}}+(1-x_{p}^{\left( {\small R}_{{\small %
pq}}^{{\small 0}}\right) _{{\small p}}^{{\small 1}}})e^{p})\geq z_{i}\text{.}
\end{equation*}

Hence 
\begin{equation*}
R_{pq}^{0}\in \mathcal{R}_{ip}^{-}\left( f\right) \text{.}
\end{equation*}

Suppose $\left\{ 
\begin{array}{l}
R\in \mathcal{R}_{iq}^{-},\text{ }x_{p}^{R}=s<r=x_{q}^{R}\text{ and } \\ 
R^{\prime }=\left(
sR,x^{R}+(x_{p}^{R}-x_{q}^{R})e^{q}+(x_{q}^{R}-x_{p}^{R})e^{p}\right) \in 
\mathcal{R}_{iq}^{-}%
\end{array}%
\right. $ and let us show that $R_{pq}^{0}\in $ $\mathcal{R}_{ip}^{-}$.

Since 
\begin{equation*}
x^{\left( {\small R}_{{\small pq}}^{{\small 0}}\right) _{{\small p}}^{%
{\small 1}}}=x^{{\small R}_{{\small q}}^{{\small 1}}}
\end{equation*}

it follows that 
\begin{equation*}
f(x^{\left( {\small R}_{{\small pq}}^{{\small 0}}\right) _{{\small p}}^{%
{\small 1}}})<z_{i}
\end{equation*}

as 
\begin{equation*}
f\left( x^{{\small R}_{{\small q}}^{{\small 1}}}\right) <z_{i}\text{.}
\end{equation*}

Let us show that $f(x^{\left( {\small R}_{{\small pq}}^{{\small 0}}\right) _{%
{\small p}}^{{\small 1}}}+(1-x_{p}^{\left( {\small R}_{{\small pq}}^{{\small %
0}}\right) _{{\small p}}^{{\small 1}}})e^{p}\geq z_{i}$.

We have by monotonicity 
\begin{equation*}
f(x^{\left( {\small R}_{{\small pq}}^{{\small 0}}\right) _{{\small p}}^{%
{\small 1}}}+(r-x_{p}^{\left( {\small R}_{{\small pq}}^{{\small 0}}\right) _{%
{\small p}}^{{\small 1}}})e^{p})<z_{i}\text{.}
\end{equation*}

Since 
\begin{equation*}
x^{{\small R}_{{\small q}}^{\prime {\small 1}}}+(1-x_{q}^{{\small R}_{%
{\small q}}^{\prime {\small 1}}})e^{q}=x^{\left( {\small R}_{{\small pq}}^{%
{\small 0}}\right) _{{\small p}}^{{\small 1}}}+(r-x_{p}^{\left( {\small R}_{%
{\small pq}}^{{\small 0}}\right) _{{\small p}}^{{\small 1}%
}})e^{p}+(1-x_{q}^{\left( {\small R}_{{\small pq}}^{{\small 0}}\right) _{%
{\small p}}^{{\small 1}}})e^{q}
\end{equation*}%
\newline
and 
\begin{equation*}
f(x^{{\small R}_{{\small q}}^{\prime {\small 1}}}+(1-x_{q}^{{\small R}_{%
{\small q}}^{\prime {\small 1}}})e^{q})\geq z_{i}
\end{equation*}

we have 
\begin{equation*}
f\left( x^{\left( {\small R}_{{\small pq}}^{{\small 0}}\right) _{{\small p}%
}^{{\small 1}}}+(r-x_{p}^{\left( {\small R}_{{\small pq}}^{{\small 0}%
}\right) _{{\small p}}^{{\small 1}}})e^{p}+(1-x_{q}^{\left( {\small R}_{%
{\small pq}}^{{\small 0}}\right) _{{\small p}}^{{\small 1}}})e^{q}\right)
\geq z_{i}
\end{equation*}%
that is, 
\begin{equation*}
f\left( x^{\left( {\small R}_{{\small pq}}^{{\small 0}}\right) _{{\small p}%
}^{{\small 1}}}+(r-x_{p}^{\left( {\small R}_{{\small pq}}^{{\small 0}%
}\right) _{{\small p}}^{{\small 1}}})e^{p}+(1-x_{q}^{\left( {\small R}_{%
{\small pq}}^{{\small 0}}\right) _{{\small p}}^{{\small 1}}})e^{q}\right)
>f(x^{\left( {\small R}_{{\small pq}}^{{\small 0}}\right) _{{\small p}}^{%
{\small 1}}}+(r-x_{p}^{\left( {\small R}_{{\small pq}}^{{\small 0}}\right) _{%
{\small p}}^{{\small 1}}})e^{p})\text{.}
\end{equation*}

From $p\succeq q,$ we have 
\begin{equation*}
f\left( x^{\left( {\small R}_{{\small pq}}^{{\small 0}}\right) _{{\small p}%
}^{{\small 1}}}+(r-x_{p}^{\left( {\small R}_{{\small pq}}^{{\small 0}%
}\right) _{{\small p}}^{{\small 1}}})e^{p}+(1-r)e^{p}\right) \geq f\left(
x^{\left( {\small R}_{{\small pq}}^{{\small 0}}\right) _{{\small p}}^{%
{\small 1}}}+(r-x_{p}^{\left( {\small R}_{{\small pq}}^{{\small 0}}\right) _{%
{\small p}}^{{\small 1}}})e^{p}+(1-x_{q}^{\left( {\small R}_{{\small pq}}^{%
{\small 0}}\right) _{{\small p}}^{{\small 1}}})e^{q}\right)
\end{equation*}%
that is, 
\begin{equation*}
f\left( x^{\left( {\small R}_{{\small pq}}^{{\small 0}}\right) _{{\small p}%
}^{{\small 1}}}+(r-x_{p}^{\left( {\small R}_{{\small pq}}^{{\small 0}%
}\right) _{{\small p}}^{{\small 1}}})e^{p}+(1-r)e^{p}\right) \geq z_{i}
\end{equation*}

which implies 
\begin{equation*}
f\left( x^{\left( {\small R}_{{\small pq}}^{{\small 0}}\right) _{{\small p}%
}^{{\small 1}}}+(1-x_{p}^{\left( {\small R}_{{\small pq}}^{{\small 0}%
}\right) _{{\small p}}^{{\small 1}}})e^{p}\right) \geq z_{i}
\end{equation*}

because \newline
\begin{equation*}
x^{\left( {\small R}_{{\small pq}}^{{\small 0}}\right) _{{\small p}}^{%
{\small 1}}}+(r-x_{p}^{\left( {\small R}_{{\small pq}}^{{\small 0}}\right) _{%
{\small p}}^{{\small 1}}})e^{p}+(1-r)e^{p}=x^{\left( {\small R}_{{\small pq}%
}^{{\small 0}}\right) _{{\small p}}^{{\small 1}}}+(1-x_{p}^{\left( {\small R}%
_{{\small pq}}^{{\small 0}}\right) _{{\small p}}^{{\small 1}}})e^{p}\text{.}
\end{equation*}

Therefore \newline
\begin{equation*}
f(x^{\left( {\small R}_{{\small pq}}^{{\small 0}}\right) _{{\small p}}^{%
{\small 1}}})<z_{i}\text{ and }f(x^{\left( {\small R}_{{\small pq}}^{{\small %
0}}\right) _{{\small p}}^{{\small 1}}}+(1-x_{p}^{\left( {\small R}_{{\small %
pq}}^{{\small 0}}\right) _{{\small p}}^{{\small 1}}})e^{p})\geq z_{i}\text{.}
\end{equation*}

Hence 
\begin{equation*}
R_{pq}^{0}\in \mathcal{R}_{ip}^{-}\text{.}
\end{equation*}
\end{proof}

\bigskip

For the sequel, for any $p,$ $q\in N$, we define the following sets:

\begin{equation*}
\begin{tabular}{l}
$\mathcal{D}_{ipq}^{+}=\left\{ R=(sR,x^{R})\in \mathcal{R}_{iq}^{+}\text{
such that }\left( \text{ 
\begin{tabular}{c}
$R^{\prime
}=(sR,x^{R}+(x_{p}^{R}-x_{q}^{R})e^{q}+(x_{q}^{R}-x_{p}^{R})e^{p})\in \text{ 
}\mathcal{R}_{iq}^{+}$ \\ 
and \\ 
$x_{p}^{R}>x_{q}^{R}$%
\end{tabular}%
}\right) \right\} $ \\ 
$\mathcal{D}_{ipq}^{-}=\left\{ R=(sR,x^{R})\in \mathcal{R}_{iq}^{-}\text{
such that }\left( \text{%
\begin{tabular}{l}
$R^{\prime
}=(sR,x^{R}+(x_{p}^{R}-x_{q}^{R})e^{q}+(x_{q}^{R}-x_{p}^{R})e^{p})\in 
\mathcal{R}$ \\ 
\multicolumn{1}{c}{and} \\ 
$x_{p}^{R}<x_{q}^{R}$%
\end{tabular}%
}\right) \right\} $%
\end{tabular}%
\end{equation*}

\subsubsection{Proof of Theorem 2}

\begin{proof}
Suppose that $p\succeq q$ and let us show that $\left\vert \mathcal{R}%
_{ip}^{+}\right\vert \geq \left\vert \mathcal{R}_{iq}^{+}\right\vert $ for
any $i\in \left\{ 1,...,k-1\right\} $.

Define the correspondence $\mathcal{\psi }_{pq}:$ $\mathcal{R}%
_{iq}^{+}\longrightarrow $ $\mathcal{R}_{ip}^{+}$ which to any $R\in $ $%
\mathcal{R}_{iq}^{+}$ associates an ordered allocation of positions as
follows:

\medskip

\begin{tabular}{|c|c|c|c|}
\hline
$R$ & $x_{p}^{R}<x_{q}^{R}$ & $x_{p}^{R}>x_{q}^{R}$ & $x_{p}^{R}=x_{q}^{R}$
\\ \hline
$\mathcal{\psi }_{pq}(R)$ & $\left\{ 
\begin{array}{l}
R_{pq}^{0}\text{ if }R\in \text{ }\mathcal{R}_{iq}^{+} \\ 
R_{pq}^{0}\text{ if }R\in \text{ }\mathcal{D}_{ipq}^{-} \\ 
R_{pq}\text{ if }R\notin \text{ }\mathcal{R}_{iq}^{+}\cup \text{ }\mathcal{D}%
_{ipq}^{-}%
\end{array}%
\right. $ & $\left\{ 
\begin{array}{l}
R_{pq}^{0}\text{ if }R\in \mathcal{R}_{iq}^{-} \\ 
R_{pq}^{0}\text{ if }R\in \text{ }\mathcal{D}_{ipq}^{+} \\ 
R_{pq}\text{ if }R\notin \mathcal{R}_{iq}^{-}\cup \text{ }\mathcal{D}%
_{ipq}^{+}%
\end{array}%
\right. $ & $R_{pq}$ \\ \hline
\end{tabular}

\medskip

The table above gives the value $\mathcal{\psi }_{pq}(R)$ of $R$ depending
on how $x_{p}^{R}$ and $x_{q}^{R}$ compare. We note that $\mathcal{\psi }%
_{pq}$ is well defined. This indeed follows from Lemmas 2-5. Let now show
that $\mathcal{\psi }_{pq}$\ is injective.

Remark that for any $R\in $ $\mathcal{R}_{iq}^{+}$, it is the case that $%
\mathcal{\psi }_{pq}(R)\in \{R_{pq}^{0},R_{pq}\}$.

Let $R$, $R^{\prime }\in $ $\mathcal{R}_{iq}^{+}$ such that $R\neq R^{\prime
}$. Let us show that $\mathcal{\psi }_{pq}\left( R\right) \neq \mathcal{\psi 
}_{pq}\left( R^{\prime }\right) $.

\begin{itemize}
\item If $sR\neq sR^{\prime }$, then $sR_{pq}^{0}\neq sR_{pq}^{\prime 0},$ $%
sR_{pq}^{0}\neq sR_{pq}^{\prime },$ $sR_{pq}\neq sR_{pq}^{\prime 0}$ and $%
sR_{pq}\neq sR_{pq}^{\prime }$. It follows that $R_{pq}^{0}\neq
R_{pq}^{\prime 0},$ $R_{pq}^{0}\neq R_{pq}^{\prime }$, $R_{pq}\neq
R_{pq}^{\prime 0},$ and $R_{pq}\neq R_{pq}^{\prime }$; hence $\mathcal{\psi }%
_{pq}\left( R\right) \neq \mathcal{\psi }_{pq}\left( R^{\prime }\right) $.

\item If $sR=sR^{\prime }$ and $x^{R}\neq x^{R^{\prime }}$, we consider the
following two cases:
\end{itemize}

\textbf{Case 1: }There exists a player $b\in N\backslash \left\{ p,q\right\} 
$ such that $x_{b}^{R}\neq x_{b}^{R^{\prime }}$.

Since $x_{b}^{{\small R}_{{\small pq}}^{{\small 0}}}$, $x_{b}^{{\small R}_{%
{\small pq}}}$, $x_{b}^{{\small R}_{{\small pq}}^{\prime {\small 0}}}$ and $%
x_{b}^{{\small R}_{{\small pq}}^{\prime }}$ are pairwise distinct, it is the
case that 
\begin{equation*}
x^{{\small R}_{{\small pq}}^{{\small 0}}}\neq x^{{\small R}_{{\small pq}%
}^{\prime {\small 0}}},x^{{\small R}_{{\small pq}}^{{\small 0}}}\neq x^{%
{\small R}_{{\small pq}}^{\prime }}x^{{\small R}_{{\small pq}}}\neq x^{%
{\small R}_{{\small pq}}^{\prime {\small 0}}}\text{ and }x^{{\small R}_{%
{\small pq}}}\neq x^{{\small R}_{{\small pq}}^{\prime }}
\end{equation*}

which implies 
\begin{equation*}
R_{pq}^{0}\neq R_{pq}^{\prime 0},R_{pq}^{0}\neq R_{pq}^{\prime 0},R_{pq}\neq
R_{pq}^{\prime 0},\text{ and }R_{pq}\neq R_{pq}^{\prime }\text{.}
\end{equation*}

Hence $\medskip $ 
\begin{equation*}
\mathcal{\psi }_{pq}\left( R\right) \neq \mathcal{\psi }_{pq}\left(
R^{\prime }\right) \text{.}
\end{equation*}

\textbf{Case 2: }For all player\textbf{\ }$b\in N\backslash \left\{
p,q\right\} $, $x_{b}^{R}=x_{b}^{R^{\prime }}$.

This case has three subcases $i)$, $ii)$ and $iii)$ below.

$i)$ If $x_{p}^{R}=x_{p}^{R^{\prime }}$ and $x_{q}^{R}\neq x_{q}^{R^{\prime
}}$, then $R_{pq}^{0}\neq R_{pq}^{\prime 0}$ and $R_{pq}\neq R_{pq}^{\prime
} $.

\begin{itemize}
\item If $x_{q}^{R^{\prime }}=x_{p}^{R}$, since
\end{itemize}

\begin{equation*}
x_{q}^{{\small R}_{{\small pq}}^{{\small 0}}}=x_{q}^{R}\text{, }x_{q}^{%
{\small R}_{{\small pq}}^{\prime }}=x_{p}^{R^{\prime
}}=x_{p}^{R}=x_{q}^{R^{\prime }}\neq x_{q}^{R}\text{, }x_{p}^{{\small R}_{%
{\small pq}}^{\prime {\small 0}}}=x_{p}^{R^{\prime }}\text{, and }x_{p}^{%
{\small R}_{{\small pq}}}=x_{q}^{R}\neq x_{q}^{R^{\prime
}}=x_{p}^{R}=x_{p}^{R^{\prime }}
\end{equation*}

it follows that 
\begin{equation*}
x_{q}^{{\small R}_{{\small pq}}^{{\small 0}}}\neq x_{q}^{{\small R}_{{\small %
pq}}^{\prime }}\text{ and }x_{p}^{{\small R}_{{\small pq}}^{\prime {\small 0}%
}}\neq x_{p}^{{\small R}_{{\small pq}}}
\end{equation*}

that is, 
\begin{equation*}
R_{pq}^{0}\neq R_{pq}^{\prime }\text{ and }R_{pq}^{\prime 0}\neq R_{pq}\text{%
.}
\end{equation*}

If $x_{q}^{R}=x_{p}^{R}$, since 
\begin{equation*}
x_{p}^{{\small R}_{{\small pq}}^{{\small 0}}}=x_{p}^{R}\text{, }x_{p}^{%
{\small R}_{{\small pq}}^{\prime }}=x_{q}^{R^{\prime }}\neq
x_{q}^{R}=x_{p}^{R}\text{, }x_{q}^{{\small R}_{{\small pq}}^{\prime {\small 0%
}}}=x_{q}^{R^{\prime }}\text{ and }x_{q}^{{\small R}_{{\small pq}%
}}=x_{p}^{R}=x_{q}^{R}\neq x_{q}^{R^{\prime }}
\end{equation*}

it follows that 
\begin{equation*}
x_{p}^{{\small R}_{{\small pq}}^{{\small 0}}}\neq x_{p}^{{\small R}_{{\small %
pq}}^{\prime }}\text{and }x_{q}^{{\small R}_{{\small pq}}^{\prime {\small 0}%
}}\neq x_{q}^{{\small R}_{{\small pq}}}
\end{equation*}

that is, 
\begin{equation*}
R_{pq}^{0}\neq R_{pq}^{\prime }\text{ and }R_{pq}^{\prime 0}\neq R_{pq}.
\end{equation*}

\begin{itemize}
\item If $x_{q}^{R^{\prime }}\neq x_{p}^{R}$ and $x_{q}^{R}\neq x_{p}^{R}$,
then, given that
\end{itemize}

\begin{equation*}
x_{p}^{{\small R}_{{\small pq}}^{{\small 0}}}=x_{p}^{R}\text{, }x_{p}^{%
{\small R}_{{\small pq}}^{\prime }}=x_{q}^{R^{\prime }}\neq x_{p}^{R}\text{, 
}x_{q}^{{\small R}_{{\small pq}}^{\prime {\small 0}}}=x_{q}^{R^{\prime }}%
\text{and }x_{q}^{{\small R}_{{\small pq}}}=x_{p}^{R}\neq x_{q}^{R^{\prime }}
\end{equation*}

we have 
\begin{equation*}
x_{p}^{{\small R}_{{\small pq}}^{{\small 0}}}\neq x_{p}^{{\small R}_{{\small %
pq}}^{\prime }}\text{and }x_{q}^{{\small R}_{{\small pq}}^{\prime {\small 0}%
}}\neq x_{q}^{{\small R}_{{\small pq}}}
\end{equation*}

that is, 
\begin{equation*}
R_{pq}^{0}\neq R_{pq}^{\prime }\text{ and }R_{pq}^{\prime 0}\neq R_{pq}\text{%
.}
\end{equation*}

$ii)$ If $x_{p}^{R}\neq x_{p}^{R^{\prime }}$ and $x_{q}^{R}=x_{q}^{R^{\prime
}}$, following a similar reasoning as in a), we can show that $%
R_{pq}^{0}\neq R_{pq}^{\prime 0}$, $R_{pq}\neq R_{pq}^{\prime }$, $%
R_{pq}^{0}\neq R_{pq}^{\prime }$ and $R_{pq}^{\prime 0}\neq R_{pq}$.\medskip

$iii)$ Suppose that $x_{p}^{R}\neq x_{p}^{R^{\prime }}$ and $x_{q}^{R}\neq
x_{q}^{R^{\prime }}$.

\begin{itemize}
\item If $x_{p}^{R}\neq x_{q}^{R^{\prime }}$, then:
\end{itemize}

$x_{p}^{{\small R}_{{\small pq}}^{{\small 0}}}=x_{p}^{R}\neq
x_{p}^{R^{\prime }}=x_{p}^{{\small R}_{{\small pq}}^{\prime {\small 0}}},$
that is, $R_{pq}^{0}\neq R_{pq}^{\prime 0};$

$x_{p}^{{\small R}_{{\small pq}}^{{\small 0}}}=x_{p}^{R}\neq
x_{q}^{R^{\prime }}=x_{p}^{{\small R}_{{\small pq}}^{\prime }},$ that is, $%
R_{pq}^{0}\neq R_{pq}^{\prime };$

$x_{q}^{{\small R}_{{\small pq}}^{\prime {\small 0}}}=x_{q}^{R^{\prime
}}\neq x_{p}^{R}=x_{q}^{{\small R}_{{\small pq}}},$ that is, $R_{pq}^{\prime
0}\neq R_{pq};$

$x_{p}^{{\small R}_{{\small pq}}}=x_{q}^{R}\neq x_{q}^{R^{\prime }}=x_{p}^{%
{\small R}_{{\small pq}}^{\prime }},$ that is, $R_{pq}^{\prime }\neq R_{pq}$.

\medskip

Similarly, if $x_{q}^{R}\neq x_{p}^{R^{\prime }}$, we get the same
conclusions.

\begin{itemize}
\item If $x_{p}^{R}=x_{q}^{R^{\prime }}$ and $x_{q}^{R}=x_{p}^{R^{\prime }}$%
, then $R^{\prime }=(sR,x^{R}+\left( x_{q}^{R}-x_{p}^{R}\right) e^{p}+\left(
x_{p}^{R}-x_{q}^{R}\right) e^{q})$, \newline
$R=(sR^{\prime },x^{R^{\prime }}+\left( x_{q}^{R^{\prime }}-x_{p}^{R^{\prime
}}\right) e^{p}+\left( x_{p}^{R^{\prime }}-x_{q}^{R^{\prime }}\right) e^{q})$
and $R$, $R^{\prime }\in $ $\mathcal{R}_{iq}^{+}$.
\end{itemize}

- Suppose that $R$\textbf{, }$R^{\prime }\in $ $R_{iq}$.

\begin{itemize}
\item If $sR\left( p\right) <sR\left( q\right) $ and $x_{p}^{R}<x_{q}^{R}$,
then $x_{q}^{R^{\prime }}<x_{p}^{R^{\prime }}$, and hence $\mathcal{\psi }%
_{pq}\left( R\right) =R_{pq}^{0}$ and $\mathcal{\psi }_{pq}\left( R^{\prime
}\right) =R_{pq}^{\prime 0}$, that is, $\mathcal{\psi }_{pq}\left( R\right)
\neq \mathcal{\psi }_{pq}\left( R^{\prime }\right) $ because $R_{pq}^{0}\neq
R_{pq}^{\prime 0}$.

\item If $sR\left( p\right) <sR\left( q\right) $ and $x_{p}^{R}>x_{q}^{R}$,
then $x_{p}^{R^{\prime }}<x_{q}^{R^{\prime }}$, and hence $\mathcal{\psi }%
_{pq}\left( R\right) =R_{pq}^{0}$ and $\mathcal{\psi }_{pq}\left( R^{\prime
}\right) =R_{pq}^{\prime 0},$ that is, $\mathcal{\psi }_{pq}\left( R\right)
\neq \mathcal{\psi }_{pq}\left( R^{\prime }\right) $ because $R_{pq}^{0}\neq
R_{pq}^{\prime 0}$.

\item If $sR\left( p\right) >sR\left( q\right) $, then $\mathcal{\psi }%
_{pq}\left( R^{\prime }\right) =R_{pq}^{\prime }$ and $\mathcal{\psi }%
_{pq}\left( R\right) =R_{pq}$, that is, $\mathcal{\psi }_{pq}\left( R\right)
\neq \mathcal{\psi }_{pq}\left( R^{\prime }\right) $ since $R_{pq}\neq
R_{pq}^{\prime }$.
\end{itemize}

- Suppose that $R$, $R^{\prime }\in \mathcal{R}_{iq}^{-}$.

\begin{itemize}
\item If $sR\left( p\right) <sR\left( q\right) $ and $x_{p}^{R}<x_{q}^{R}$,
then $x_{q}^{R^{\prime }}<x_{p}^{R^{\prime }}$, and hence $\mathcal{\psi }%
_{pq}\left( R\right) =R_{pq}^{0}$ and $\mathcal{\psi }_{pq}\left( R^{\prime
}\right) =R_{pq}^{\prime 0}$, that is, $\mathcal{\psi }_{pq}\left( R\right)
\neq \mathcal{\psi }_{pq}\left( R^{\prime }\right) $ because $R_{pq}^{0}\neq
R_{pq}^{\prime 0}$.

\item If $sR\left( p\right) <sR\left( q\right) $ and $x_{p}^{R}>x_{q}^{R}$,
then $x_{p}^{R^{\prime }}<x_{q}^{R^{\prime }}$, and hence $\mathcal{\psi }%
_{pq}\left( R\right) =R_{pq}^{0}$ and $\mathcal{\psi }_{pq}\left( R^{\prime
}\right) =R_{pq}^{\prime 0},$ which is equivalent to $\mathcal{\psi }%
_{pq}\left( R\right) \neq \mathcal{\psi }_{pq}\left( R^{\prime }\right) $
because $R_{pq}^{0}\neq R_{pq}^{\prime 0}$.

\item If $sR\left( p\right) >sR\left( q\right) $, then $\mathcal{\psi }%
_{pq}\left( R^{\prime }\right) =R_{pq}^{\prime }$ and $\mathcal{\psi }%
_{pq}\left( R\right) =R_{pq}$, that is, $\mathcal{\psi }_{pq}\left( R\right)
\neq \mathcal{\psi }_{pq}\left( R^{\prime }\right) $ because $R_{pq}\neq
R_{pq}$.
\end{itemize}

- Suppose that $R\in $ $\mathcal{R}_{iq}^{+},$ $R^{\prime }\in \mathcal{R}%
_{iq}^{-}$. Then, since $p\succeq q$, we have $x_{p}^{R}<x_{q}^{R}$ and $%
x_{p}^{R^{\prime }}>x_{q}^{R^{\prime }}$.

\begin{itemize}
\item If $sR\left( p\right) <sR\left( q\right) $, then $\mathcal{\psi }%
_{pq}\left( R\right) =R_{pq}^{0}$ and $\mathcal{\psi }_{pq}\left( R^{\prime
}\right) =R_{pq}^{\prime 0}$, that is, $\mathcal{\psi }_{pq}\left( R\right)
\neq \mathcal{\psi }_{pq}\left( R^{\prime }\right) $ because $R_{pq}^{0}\neq
R_{pq}^{\prime 0}$.

\item If $sR\left( p\right) >sR\left( q\right) $, then $\mathcal{\psi }%
_{pq}\left( R\right) =R_{pq}$ and $\mathcal{\psi }_{pq}\left( R^{\prime
}\right) =R_{pq}^{\prime },$ that is, $\mathcal{\psi }_{pq}\left( R\right)
\neq \mathcal{\psi }_{pq}\left( R^{\prime }\right) $ because $R_{pq}\neq
R_{pq}^{\prime }.$
\end{itemize}

- Suppose that $R\in \mathcal{R}_{iq}^{-},$ $R^{\prime }\in $ $\mathcal{R}%
_{iq}^{+}$. Then, since $p\succeq q$, we have $x_{p}^{R}>x_{q}^{R}$ et $%
x_{p}^{R^{\prime }}<x_{q}^{R^{\prime }}$.

\begin{itemize}
\item If $sR\left( p\right) <sR\left( q\right) ,$ then $\mathcal{\psi }%
_{pq}\left( R\right) =R_{pq}^{0}$ and $\mathcal{\psi }_{pq}\left( R^{\prime
}\right) =R_{pq}^{\prime 0}$, that is, $\mathcal{\psi }_{pq}\left( R\right)
\neq \mathcal{\psi }_{pq}\left( R^{\prime }\right) $ because $R_{pq}^{0}\neq
R_{pq}^{\prime 0}$.

\item If $sR\left( p\right) >sR\left( q\right) $, then $\mathcal{\psi }%
_{pq}\left( R\right) =R_{pq}$ and $\mathcal{\psi }_{pq}\left( R^{\prime
}\right) =R_{pq}^{\prime },$ that is, $\mathcal{\psi }_{pq}\left( R\right)
\neq \mathcal{\psi }_{pq}\left( R^{\prime }\right) $ because $R_{pq}\neq
R_{pq}^{\prime }$.
\end{itemize}

We conclude that for any $R$, $R^{\prime }\in $ $\mathcal{R}_{iq}^{+}$ such
that $R\neq R^{\prime }$, $\mathcal{\psi }_{pq}\left( R\right) \neq \mathcal{%
\psi }_{pq}\left( R^{\prime }\right) $, that is, $\mathcal{\psi }_{pq}$ is
injective, which implies that $\left\vert \mathcal{R}_{ip}^{+}\right\vert
\geq $ $\left\vert \mathcal{R}_{iq}^{+}\right\vert $.
\end{proof}

\bigskip\ \pagebreak

\end{document}